\newtheorem{theorem}{Theorem}[section]
\newtheorem*{theorem*}{Theorem}
\newtheorem{lemma}[theorem]{Lemma}
\newtheorem{corollary}[theorem]{Corollary}
\newtheorem{proposition}[theorem]{Proposition}
\theoremstyle{definition}
\newtheorem{definition}[theorem]{Definition}
\theoremstyle{remark}
\numberwithin{equation}{section}
\newcommand{\Z}{\ensuremath{\mathbb{Z}}}   %\Z integers Z
\newcommand{\Q}{\ensuremath{\mathbb{Q}}}
\newcommand{\Hom}{\operatorname{Hom}}
\newcommand{\Img}{\operatorname{Im}}
\newcommand{\Ext}{\operatorname{Ext}} %---- Ext yazmak icin
\newcommand{\Ker}{\operatorname{Ker}}
\newcommand{\Rad}{\operatorname{Rad}}
\newcommand{\Soc}{\operatorname{Soc}}
\newcommand{\Mod}{\operatorname{Mod-R}}
\newcommand{\coclosed}[2]{\ensuremath{\xymatrix@1{ #1 \, \ar@{^{(}->}[r]^-{cc} &
#2}}}
\newcommand{\cosmall}[3]{\ensuremath{\xymatrix@1{ #1 \,
\ar@{^{(}->}[r]^-{cs}_-{#2} & #3}}}
\newcommand{\ShortExactSequence}[5]{\ensuremath{\xymatrix@1{ 0 \ar[r] &  #1 \ar[r]^-{#2} & #3 \ar[r]^-{#4} &  #5  \ar[r] & 0 }}}
\newcommand{\LongExactSequence}[5]{\ensuremath{\xymatrix@1{ 0 \ar[r] & #5 \ar[r] &#1 \ar[r]^-{#2} & #3 \ar[r]^-{#4} &  #5  \ar[r] & 0 }}}
\newcommand{\AShortExactSequence}[6]{\ensuremath{\xymatrix@1{ #1: 0 \ar[r] &  #2 \ar[r]^-{#3} & #4 \ar[r]^-{#5} &  #6  \ar[r] & 0 }}}
\title{Co-Kasch Modules}
\author{Rafail Alizade}
\address{Ada University \\ School of Information Technologies and Eng.\\ Baku, Azerbaijan}
	\email{ralizada@ada.edu.az}
\author{Engin Büyükaşık}
\address{Izmir Institute of Technology \\ Department of Mathematics\\ 35430 \\ Urla, \.{I}zmir\\ Türkiye}
\email{enginbuyukasik@iyte.edu.tr}
\author{Yılmaz Durğun}
\address{Çukurova University \\ Department of Mathematics\\ Adana\\ Türkiye}
\email{enginbuyukasik@iyte.edu.tr}
\date{} % delete this line to display the current date
\begin{document}

%\tableofcontents

\begin{abstract}  In this paper  we  study the modules $M$ every simple subfactors of which is a  homomorphic image of $M$ and call them co-Kasch modules.   These modules are dual to Kasch  modules $M$ every simple subfactors of which can be embedded in $M$. We  show that a module is co-Kasch if and only if every simple module in $\sigma[M]$ is a homomorphic image of $M$. In particular, a projective right module $P$ is co-Kasch if and only if $P$ is a generator for $\sigma[P].$ 
 If $R$ is right max and right $H$-ring, then every right $R$-module is co-Kasch; and the converse is true for the rings whose simple right modules have locally artinian injective hulls.   For a right artinian ring $R$, we prove that: (1) every finitely generated right $R$-module is  co-Kasch if and only if every right $R$-module is a co-Kasch module if and only if $R$ is a right $H$-ring; and (2) every finitely generated projective right $R$-module is co-Kasch if and only if the Cartan matrix of $R$ is a diagonal matrix. For a Prüfer domain $R$, we prove that, every nonzero ideal of $R$ is co-Kasch if and only if $R$ is Dedekind. The structure of $\Z$-modules that are co-Kasch is completely characterized. \end{abstract}

\subjclass[2010]{13A18, 16D10, 16D90, 16P20, 16P40}

\keywords{Kasch modules; co-Kasch modules; H-rings; max-rings.}
\maketitle

\section{introduction}
A ring $R$ is said to be right Kasch if every simple right $R$-module embeds in $R$.  Commutative artinian rings and Quasi-Frobenious rings are  well-known examples of Kasch rings.  Kasch rings were generalized to modules by several authors. For example, in \cite{kasch}, a right $R$-module $M$ is called a right Kasch module if every simple subfactor of $M$ can be embedded into $M$. By a subfactor of $M$, we mean a submodule of a factor module of $M$.

Recall that, a ring $R$ is said to be a right $H$-ring if the injective hulls of nonisomorphic simple right $R$-modules are homologically independent, that is, $\Hom_R(E(S_1),\,E(S_2))=0$ for each nonisomorphic simple right $R$-modules $S_1$ and $S_2$ (see, \cite{sharp}). Commutative Noetherian rings, and commutative semiartinian rings are $H$-rings (see, \cite{camillohrings}). Thus, commutative perfect rings are $H$-rings, as left perfect rings are right semiartinian. Right artinian rings that are right $H$-ring are characterized in \cite{papp}. A ring $R$ is said to be  right max-ring if every nonzero right $R$-module has a maximal submodule.

In this paper, a notion dual to Kasch modules is proposed and studied. A natural dualization of Kasch modules is obtained as follows. We call a right module $M$ right co-Kasch module if every simple subfactor of $M$ is a homomorphic image of $M.$ Semisimple and free modules are trivial examples of co-Kasch modules.  More generally, for each right $R$-module $M$, the module $M\oplus R$ is a right co-Kasch module.

%A right $R$-module $N$ is called $M$-generated if there exists an epimorphism $M^{(I)} \to N$ for some index set $I$.  Following Wisbauer \cite{wisbauer}, $\sigma [M]$ denotes the full subcategory of $Mod-R$, whose objects are the submodules of $M$-generated modules. 

%A ring $R$ is called a right $H$-ring if the injective hulls of nonisomorphic simple right $R$-modules are homologically independent, that is, $\Hom_R(E(S_1),\,E(S_2))=0$ for each nonisomorphic simple right $R$-modules $S_1$ and $S_2$ (see, \cite{sharp}). Commutative Noetherian rings, and commutative semiartinian rings are $H$-rings (see, \cite{camillohrings}). Thus, commutative perfect rings are $H$-rings, as left perfect rings are right semiartinian. Right artinian rings that are right $H$-ring are characterized in \cite{papp}. A ring $R$ is called a right max-ring if every nonzero right $R$-module has a maximal submodule.

%Another dual notion of Kasch ring has been studied recently in \cite{dualkasch}. They call a ring $R$ right dual Kasch if every simple right $R$-module is a homomorphic image of an injective right $R$-module. We show that, $R$ is right dual Kasch if and only if the injective hull $E(R)$ of $R$ is a right co-Kaschmodule. 

The paper consists of three sections, and  is organized as follows. 

In section 2, we give some examples, closure properties and some characterizations of co-Kasch modules. %In \cite{kasch}, it is proved that $M$ is a right Kasch module if and only if every simple module in $\sigma[M]$  can be embedded in $M$. 
 We show that, $M$ is a right co-Kasch module if and only if every simple $S$ in $\sigma[M]$ is a homomorphic image of $M$.  A projective right $R$-module is co-Kasch if and only if $P$ is a generator for $\sigma[P].$ We  prove that, arbitrary direct sum of co-Kasch modules is again co-Kasch. On the other hand, direct summands or homomorphic image of co-Kasch modules need not be co-Kasch. It is shown that, if $M$ is a co-Kasch right module, then $\frac{M}{K}$ is co-Kasch for each $K \subseteq \Rad(M).$

 In section 3, mainly we focus on  investigation of  the rings  whose right $R$-modules are co-Kasch. Over a right $V$-ring, every right module is co-Kasch. If $R$ is a local ring, then every right $R$-module is co-Kasch if and only if $R$ is right perfect. We prove that, all right $R$-modules are co-Kasch if $R$ is right $H$-ring and right max-ring; and the converse statement is true if the injective hulls of simple right $R$-modules are locally artinian. If $R$ is right noetherian, then every right $R$-module is co-Kasch if and only if $R$ is right artinian right $H$-ring.   Note that a right $R$-module $M$ is locally artinian if every finitely generated submodule of $M$ is artinian; and $R$ is right $H$-ring if there is no nonzero homomorphism between the injective hulls of nonisomorphic simple right $R$-modules.\\
 We also deal with rings whose cyclic or finitely generated right modules are co-Kasch.  Over a commutative ring every cyclic module is co-Kasch. If $R$ is either local or  right $H$-ring, then every finitely generated right $R$-module is co-Kasch.
Over a right artinian ring $R$, all right modules are right co-Kasch if and only if every finitely generated right $R$-module is co-Kasch if and only if $\Ext_R(U,\,V)=0$ for each nonisomorphic simple right $R$-modules $U$ and $V.$  For a right artinian ring $R,$ we prove that every finitely generated projective right $R$-module is co-Kasch if and only if the Cartan matrix of $R$ is a diagonal matrix. If $E(R)$ is projective, then $E(R)$ is right co-Kasch if and only if $R$ is right self-injective.  
For a commutative Noetherian ring $R,$ we obtain that $E(R)$ is co-Kasch if and only if $R$ is a Kasch ring. \\
If  $M$ is a co-Kasch right $R$-module which is either right max-projective or a right semiartinian, then $M$ is retractable.
We obtain a characterization of right $H$-rings in terms of right Kasch modules. We prove that,  $R$ is a right $H$-ring if and only if $E(S)$ is a right Kasch module for each simple right $R$-module $S$. The rings whose right modules are Kasch are not completely known. Some sufficient conditions for such rings are given in \cite{fullykasch}. We give a necessary condition for such rings. Namely, we show that if every right $R$-module is right Kasch, then $R$ is a right $H$-ring and right semiartinian. 

In section 4,  co-Kasch modules are studied over commutative rings. For a Prüfer domain $R$, we prove that $R$ is Dedekind if and only if every nonzero ideal of $R$ is co-Kasch.  A characterization of co-Kasch modules over the ring of integers is obtained. Namely, we prove that, a nonzero torsion $\Z$-module $M$ is a co-Kasch module if and only if  $pM \neq M$ for each prime $p$ with $T_p(M) \neq 0$, where $T_p(M)$ is the $p$-primary component of the torsion submodule of $M$. If $M \neq  T(M)$, then  $M$ is a co-Kasch module if and only if $pM\neq M$ for each prime $p$. 

Throughout this paper, $R$ is a ring with unity and modules are unital right $R$-modules. As usual, we denote the radical, the socle and the injective hull of a module $M$ by $\Rad(M)$, $\Soc(M)$ and $E(M)$, respectively.  We write $J(R)$ for the Jacobson radical of the ring $R.$  We write $N \subseteq M$ if $N$ is a submodule of $M.$  We also write $M^{(I)}$ for the direct sum of $I$-copies of $M$.   $\Omega$ will stand for the set of maximal right ideals of a ring $R.$ The torsion submodule of a module $M$ over a commutative domain is denoted by $T(M)$. We refer to \cite{AF}, \cite{FC}, \cite{Lammodules} and \cite{wisbauer}  for all  undefined notions  in this paper.

\section{Co-Kasch modules}

As a dual notion to Kasch modules we investigate the following modules.

\begin{definition} Let $M$ be a right $R$-module.
\begin{enumerate}

\item[(a)] A module $N$ is  said to be a subfactor of $M$ if $N \subseteq \frac{M}{K}$ for some $K \subseteq M$. If, in addition,  $N$ is a simple module, $N$ is called a simple subfactor of $M.$

\item[(b)]  A right module $M$ is said to be a co-Kasch module if every simple subfactor of $M$ is a homomorphic image of $M.$ 
\end{enumerate}
\end{definition}

\begin{proposition} The following are equivalent for a right $R$-module $M.$

\begin{enumerate}

\item[(1)] $M$ is co-Kasch.

\item[(2)] $\Hom_R(M,\,S)\neq 0$ for each simple subfactor $S$ of $M.$

\item[(3)] If $\Hom_R(mR,\,S)\neq 0$ for some $m \in M$ and simple right $R$-module $S$, then $\Hom_R(M,\,S)\neq 0.$
\end{enumerate}

\end{proposition}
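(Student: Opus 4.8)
The plan is to prove $(1)\Leftrightarrow(2)$ and $(2)\Leftrightarrow(3)$, the first being a direct reformulation and the second resting on the observation that the simple subfactors of $M$ are, up to isomorphism, precisely the simple homomorphic images of the cyclic submodules $mR$ with $m\in M$.

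For $(1)\Leftrightarrow(2)$ I would use the standard fact that a homomorphism $f\colon M\to S$ into a simple module $S$ is nonzero if and only if it is an epimorphism, since $\Image f$ is then a nonzero submodule of $S$, hence all of $S$. Thus ``$S$ is a homomorphic image of $M$'' and ``$\Hom_R(M,S)\neq 0$'' express the same condition, and since the notion of simple subfactor is literally the same in $(1)$ and $(2)$, the equivalence is immediate.

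For $(2)\Rightarrow(3)$, assume $\Hom_R(mR,S)\neq 0$ for some $m\in M$ and some simple right $R$-module $S$. Pick a nonzero, hence surjective, $\varphi\colon mR\to S$ and set $K=\Ker\varphi$. Then $K\subseteq mR\subseteq M$ and $S\cong mR/K$, and since $mR/K$ is a submodule of the factor module $M/K$, the module $S$ is (up to isomorphism) a simple subfactor of $M$; by $(2)$ we conclude $\Hom_R(M,S)\neq 0$. For $(3)\Rightarrow(2)$, let $S$ be a simple subfactor of $M$, say $S\subseteq M/K$ for some $K\subseteq M$, and choose $m\in M$ with $0\neq m+K\in S$. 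As $S$ is simple, $S=(m+K)R=(mR+K)/K$, which is the image of $mR$ under the composite $mR\hookrightarrow M\twoheadrightarrow M/K$; hence $\Hom_R(mR,S)\neq 0$, and $(3)$ yields $\Hom_R(M,S)\neq 0$. Together with $(1)\Leftrightarrow(2)$ this closes the cycle.

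I do not anticipate a genuine obstacle: the only point requiring a little care is interpreting ``subfactor'' up to isomorphism and recording that testing simple subfactors can be reduced to cyclic submodules $mR$ of $M$ — everything else is routine diagram bookkeeping with the ``nonzero $=$ surjective into a simple'' principle.
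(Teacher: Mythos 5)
Your argument is correct and follows essentially the same route as the paper: the cycle $(1)\Leftrightarrow(2)$, $(2)\Rightarrow(3)$, $(3)\Rightarrow(2)$ uses exactly the paper's observations that a nonzero map into a simple module is surjective, that a simple image of $mR$ is a simple subfactor of $M$, and that a simple subfactor $S\subseteq M/K$ is a simple image of some cyclic $mR$. No gaps; your explicit remark that subfactors are taken up to isomorphism only makes precise what the paper's proof uses implicitly.
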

  
\begin{proof} $(1) \Rightarrow (2)$ is clear from the definition of co-Kasch module.

$(2) \Rightarrow (3)$ Suppose $\Hom_R(mR,\,S)\neq 0$ for some $m \in M$ and simple right $R$-module $S.$ Then $\frac{mR}{K} \cong S$ for some $K \subseteq mR.$ Thus $S$ is isomorphic to simple subfactor of $M,$ so $\Hom_R(M,\,S)\neq 0$  by $(2).$  

$(3) \Rightarrow (1)$ Let $ \frac{N}{K}$ be a simple subfactor of $M$, where $K \subseteq N \subseteq M.$ Let $n$ be an element of $N$ which is not in $K.$ Then $\Hom_R(nR,\,\frac{N}{K})\neq 0.$ Thus $\Hom_R(nR,\,\frac{N}{K})\neq 0$  by $(3).$ Hence $M$ is a co-Kasch module.
\end{proof}

%The characterization of right $H$-rings given in Proposition \ref{prop:H-ringcharacterization} motivates the following definition. 

In the following proposition we give some examples of modules that are  co-Kasch.

\begin{proposition}\label{prop:examplesHmodules} The following are hold.
\begin{enumerate}

\item[(1)]     The module $N \oplus R$ is  right co-Kasch module for every right $R$-module $N$.
  \item[(2)]   Semisimple right $R$-modules and free right $R$-modules  are co-Kasch modules.   
  \item[(3)] For every right $R$-module $M$, the modules $$ M_1=M \oplus (\oplus_{P \in \Omega}\frac{R}{P})\,\,\, \text{and}\,\,\,    M_2=M \oplus (\prod _{P \in \Omega}\frac{R}{P})$$ are right co-Kasch modules, where $\Omega$ is the set of all maximal right ideals of $R$.
  \end{enumerate}
  \end{proposition}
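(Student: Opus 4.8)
The plan is to reduce everything to part (1), which will be proved by a direct argument with the Proposition characterizing co-Kasch modules. For part (1), I would verify condition (2) of the Proposition: given a simple subfactor $S$ of $N \oplus R$, I must show $\Hom_R(N \oplus R,\,S) \neq 0$. Since $S$ is a homomorphic image of a cyclic module (being simple), $S \cong R/P$ for some maximal right ideal $P$. The key observation is that every simple module is a homomorphic image of $R$ itself, via the projection $R \to R/P \cong S$. Composing with the projection $N \oplus R \to R$ gives a nonzero map $N \oplus R \to S$, so $\Hom_R(N \oplus R,\,S) \neq 0$. By the Proposition, $N \oplus R$ is co-Kasch.

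For part (2): free modules are co-Kasch because a free module $F \cong R^{(I)}$ with $I \neq \emptyset$ contains $R$ as a direct summand, and the argument above (projecting onto one free coordinate, then onto $R/P$) applies verbatim; the zero module is trivially co-Kasch. For semisimple modules $M$, every subfactor of $M$ is again semisimple, hence a direct summand of $M$ if it is a simple subfactor — more carefully, if $S$ is a simple subfactor $N/K$ with $K \subseteq N \subseteq M$, then since $M$ is semisimple so is $N$, so $K$ is a direct summand of $N$ and $N/K$ is isomorphic to a direct summand of $N$, hence to a simple direct summand of $M$; projecting $M$ onto that summand gives a nonzero map $M \to S$. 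So $M$ is co-Kasch.

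For part (3): the module $\bigoplus_{P \in \Omega} R/P$ already surjects onto every simple right $R$-module (every simple module is $\cong R/P$ for some $P \in \Omega$, and the corresponding coordinate projection does the job), and likewise for $\prod_{P \in \Omega} R/P$ via the coordinate projection onto the relevant factor. Then $M_1 = M \oplus \big(\bigoplus_{P \in \Omega} R/P\big)$ admits, for each simple $S$, a nonzero map to $S$ obtained by projecting onto the summand $\bigoplus_{P\in\Omega} R/P$ and then onto $S$; this shows every simple module — in particular every simple subfactor of $M_1$ — is a homomorphic image of $M_1$, so $M_1$ is co-Kasch by condition (2) of the Proposition. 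The identical argument handles $M_2$.

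I do not anticipate a genuine obstacle here: the whole point is that once a module has a summand (or retract) that maps onto all simple modules, every simple subfactor is automatically a quotient. The only mild care needed is the bookkeeping in the semisimple case, ensuring that a simple subfactor of a semisimple module is genuinely isomorphic to a direct summand rather than merely a subquotient; this follows from semisimplicity being inherited by submodules and quotients together with the splitting of short exact sequences of semisimple modules.
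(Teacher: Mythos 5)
Your proposal is correct and follows essentially the same route as the paper: every simple module is a quotient of $R$ (respectively of the appropriate coordinate $R/P$), and composing with the projection onto the summand $R$, $\bigoplus_{P\in\Omega}R/P$, or $\prod_{P\in\Omega}R/P$ shows $\Hom_R(-,S)\neq 0$ for all simple $S$, hence for all simple subfactors. The only difference is that you spell out the semisimple case and the free/product cases, which the paper dismisses as clear.
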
 
  
  \begin{proof} $(1)$ Every simple right $R$-module is a homomorphic image of $R$. Thus $\Hom_R(N\oplus R,\,S)\neq 0$ for each simple right $R$-module $S.$ Hence $N\oplus R$ is a right co-Kasch module.
  
  $(2)$ Clearly semisimple modules are co-Kasch. On the other hand, free right modules are co-Kasch by $(1).$

  $(3)$ It is easy to see that every simple right $R$-module is a homomorphic image of $\oplus_{P \in \Omega}\frac{R}{P}$ and $\prod_{P \in \Omega}\frac{R}{P}$. Hence, $M_1$ and $M_2$ are right co-Kasch modules.
  %$(3)$ From their structure, we see that $\Hom_R(M_1, S)\neq 0$ and  $\Hom_R(M_2, S)\neq 0$ for each simple right $R$-module $S$. Hence $M_1$ and $M_2$ are right $co-Kasch.$
   \end{proof}

 A right $R$-module $N$ is said to be  $M$-generated if there exists an epimorphism $M^{(I)} \to N$ for some index set $I$.  Following Wisbauer \cite{wisbauer},
$\sigma [M]$ denotes the full subcategory of $Mod-R$, whose objects are the submodules of $M$-generated modules, that is,

$$\sigma[M]=\{N \in \Mod | N \subseteq \frac{M^{(I)}}{K}\,\,\text{for some index set}\,\,I \}.$$

%A module $N$ is said to be cogenerated by $M$ if there is a monomorphism $N \to M^I$ for some index set $I$.

In \cite{kasch}, it is shown that a right $R$-module $M$ is a Kasch module if and only if every simple module in $\sigma[M]$  can be embedded into $M$.
Now we shall prove a corresponding result for co-Kasch modules. First, we need the following lemma.

%\begin{proposition}\label{kaschmodules} The following properties are equivalent for the module $M$:
%\item[(1)] $M$ is a Kasch module;
%\item[(2)] Any simple module in $sigma[M]$  can be embedded in $M$; 
%\item[(3)] Any simple module in $\sigma[M]$ is cogenerated by $M$.
%\end{proposition}

\begin{lemma}\label{simplesubfactor} Every simple module in $\sigma[M]$ is isomorphic to a subfactor of $M.$
\end{lemma}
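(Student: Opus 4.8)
The plan is to handle a simple module $S\in\sigma[M]$ in two steps: first realize it as a cyclic subfactor of a \emph{finite} power $M^{n}$, and then push $n$ down to $1$ by induction. For the first step, write $S\hookrightarrow M^{(I)}/K$ as in the definition of $\sigma[M]$, let $\pi\colon M^{(I)}\to M^{(I)}/K$ be the canonical map, and choose $x\in M^{(I)}$ with $\pi(x)\in S\setminus\{0\}$. Since $S$ is simple, $\pi(x)R=S$, so $\pi$ restricts to an epimorphism $xR\to S$ with kernel $xR\cap K$; thus $S\cong xR/(xR\cap K)$, and $xR\cap K$ is a maximal submodule of $xR$ because the quotient is simple and nonzero. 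As $x$ has finite support it lies in a finite subsum $M^{n}\subseteq M^{(I)}$ with $n\geq 1$, so $S$ is now exhibited (up to isomorphism) as a simple subfactor of $M^{n}$.

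It remains to prove, by induction on $n$, that every simple subfactor $S\cong N/L$ of $M^{n}$ (with $L\subseteq N\subseteq M^{n}$) is isomorphic to a subfactor of $M$; the case $n=1$ is trivial. Choosing $y\in N\setminus L$, the submodule $(yR+L)/L$ of the simple module $N/L$ is nonzero, hence equals $N/L$, so $S\cong yR/L'$ where $L':=yR\cap L$ is a maximal submodule of $yR$. Write $y=(y_{1},\dots,y_{n})$, let $p_{i}\colon M^{n}\to M$ be the $i$-th projection, and put $C_{i}:=\Ker(p_{i}|_{yR})=\{\,yr \mid y_{i}r=0\,\}$; note $\bigcap_{i=1}^{n}C_{i}=0$, since $yr=0$ exactly when $y_{i}r=0$ for all $i$. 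If $C_{i}\subseteq L'$ for some $i$, then $p_{i}$ induces an isomorphism $y_{i}R/p_{i}(L')\cong yR/(L'+C_{i})=yR/L'\cong S$, and since $p_{i}(L')\subseteq y_{i}R\subseteq M$ this already displays $S$ as a subfactor of $M$.

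In the opposite case $C_{i}\not\subseteq L'$ for all $i$. Then maximality of $L'$ gives $C_{1}+L'=yR$, hence $C_{1}/(C_{1}\cap L')\cong yR/L'\cong S$; meanwhile, setting $D:=\bigcap_{i=2}^{n}C_{i}$ we have $C_{1}\cap D=0$, so $C_{1}$ embeds in $yR/D$, which embeds in $\bigoplus_{i=2}^{n}yR/C_{i}\cong\bigoplus_{i=2}^{n}y_{i}R\subseteq M^{n-1}$. Therefore $S$, being a quotient of $C_{1}$, is isomorphic to a subfactor of $M^{n-1}$, and the induction hypothesis finishes the proof. I expect the one genuinely non-routine point to be exactly this last case: when no single coordinate projection cuts $S$ out of $M$, one must recognize that the ``kernel'' submodule $C_{1}$ simultaneously surjects onto $S$ and sits inside the smaller power $M^{n-1}$, which is what powers the induction; everything else is bookkeeping with the isomorphism theorems, the sole caveat being that one has to keep track of the maximality of $L'$ (the place where simplicity of $S$ is used) in order to know that $C_{i}+L'$ is either $L'$ or all of $yR$.
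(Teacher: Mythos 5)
Your proof is correct. It shares the paper's overall skeleton --- first realize the simple module $S$ as a subfactor of a finite power $M^{n}$ (using that an element of $M^{(I)}$ has finite support), then induct on $n$ --- but the induction step is carried out by a genuinely different mechanism. The paper argues on the quotient side: with an epimorphism $f\colon M_1\oplus\cdots\oplus M_n\to N$ and $S\subseteq N$, it splits into the cases $f(M_1)\cap S\neq 0$ (then $S\subseteq f(M_1)$, hence a subfactor of $M$) and $f(M_1)\cap S=0$ (then $S$ embeds in $N/f(M_1)$, which by the factor theorem is an epimorphic image of $M_2\oplus\cdots\oplus M_n$, and induction applies). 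You instead work entirely inside a cyclic submodule $yR\subseteq M^{n}$ with maximal submodule $L'$, and split according to whether some kernel $C_i=\Ker(p_i|_{yR})$ is contained in $L'$: if so, the $i$-th projection exhibits $S\cong y_iR/p_i(L')$ as a subfactor of $M$ directly; if not, maximality gives $C_1+L'=yR$, so $C_1$ surjects onto $S$ while $C_1\cap\bigcap_{i\geq 2}C_i=0$ embeds $C_1$ into $\bigoplus_{i\geq 2}yR/C_i\cong\bigoplus_{i\geq 2}y_iR\subseteq M^{n-1}$, and induction applies (your hypothesis is correctly stated up to isomorphism, which is needed here). All the isomorphism-theorem steps check out, including the identification $y_iR/p_i(L')\cong yR/(L'+C_i)$ and the maximality of $xR\cap K$ in the reduction step. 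What each approach buys: yours never mentions the ambient $M$-generated module $N$ or the factor theorem and stays concretely inside $M^{n}$, at the cost of the extra bookkeeping with the kernels $C_i$; the paper's version is closer to the generation/epimorphism viewpoint of $\sigma[M]$, which is reused verbatim later (e.g.\ in the proof of Proposition \ref{prop:diretsumofco-Kasch}).
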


\begin{proof} Let $S$ be a simple module in $\sigma[M].$ Then there is a module $N$ such that $S \subseteq N$ and an epimorphism $f: M^{(I)} \to N$ for some index set $I.$  Let $S=aR$, where $a \in N.$ Then $a=f(b)$ for some $b \in M^{(I)}.$ Since $b \in M^{(F)}$  for some finite subset $F$ of $I,$ $S \subseteq f(M^{(F)}).$ Therefore we can replace $N$ by $f(M^{(F)})$. So we have an epimorphism $f: M_1 \oplus \ldots \oplus M_n \to N $ for some positive integer $n$, where each $M_i=M$ and $S \subseteq N.$  By induction on $n$ we will prove that $S$ is a subfactor of some $M_i,$ that is a subfactor of $M.$  The case $n=1$ is trivial. Let $n=k > 1$, and  assume that the statement is true for all $n<k.$ Denote $M_2 \oplus \ldots \oplus M_n$ by $L.$ If $f(M_1) \cap S \neq 0,$ then $S \subseteq f(M_1)$, therefore $S$ is a subfactor of $M_1.$ If $f(M_1) \cap S=0,$ then for the canonical epimorphism $\sigma :N \to \frac{N}{f(M_1)}=A$ we have $Ker(\sigma) \cap S= f(M_1) \cap S=0,$ so $S\cong (\sigma  f)(S) \subseteq A.$ On the other hand, for the projection $p: M_1 \oplus L \to L,$  we have $Ker(p)=M_1 \subseteq Ker (\sigma  f),$ therefore by Factor Theorem there is a homomorphism $g: L \to A$ such that $\sigma  f = g  p.$ Since $g  p = \sigma  f$ is an epimorphism $g$ is an epimorphism as well. So we have an epimorphism $g: M_2 \oplus \ldots \oplus M_n \to A$ and $S$ is isomorphic to a submodule of $A$. By the assumption of induction $S$ is a subfactor of  $M_i$ for some $i=2,\ldots, n,$ that is a subfactor of $M.$
\end{proof}

\begin{proposition}\label{co-KaschsigmaM} The following statements are equivalent for a right module $M$:

\begin{enumerate}
\item[(1)] $M$ is a right co-Kasch module,
\item[(2)] Every simple module in $\sigma[M]$ is a homomorphic image of $M,$
\item[(3)] Every simple module in $\sigma[M]$ is  generated by  $M$.
\end{enumerate} 
\end{proposition}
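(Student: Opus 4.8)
The plan is to prove the cycle of implications $(1)\Rightarrow(2)\Rightarrow(3)\Rightarrow(1)$, leaning heavily on Lemma~\ref{simplesubfactor}.

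First, for $(1)\Rightarrow(2)$: let $S$ be a simple module in $\sigma[M]$. By Lemma~\ref{simplesubfactor}, $S$ is isomorphic to a subfactor of $M$, so $S$ is a simple subfactor of $M$. Since $M$ is co-Kasch, $S$ is a homomorphic image of $M$ by definition. This direction is essentially immediate once the lemma is invoked.

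Next, $(2)\Rightarrow(3)$ is trivial: if $S$ is a homomorphic image of $M$, say via an epimorphism $M\to S$, then in particular the composite $M^{(I)}\to M\to S$ (for any singleton $I$) exhibits $S$ as $M$-generated. Conversely, for $(3)\Rightarrow(1)$, I would let $\frac{N}{K}$ be an arbitrary simple subfactor of $M$ with $K\subseteq N\subseteq M$. Then $\frac{N}{K}\in\sigma[M]$ since it is a subfactor of $M$ (indeed $\sigma[M]$ is closed under subfactors), so by hypothesis there is an epimorphism $M^{(I)}\to \frac{N}{K}$ for some index set $I$. As in the proof of Lemma~\ref{simplesubfactor}, since $\frac{N}{K}$ is simple (hence cyclic), the image of some finite subsum $M^{(F)}$ already surjects onto $\frac{N}{K}$, and then as in Proposition~2.2(3) an argument by induction on $|F|$ — splitting off one copy of $M$ at a time via the factor theorem — shows that one of the coordinate copies of $M$ maps onto $\frac{N}{K}$; that is, $\Hom_R(M,\frac{N}{K})\neq 0$, so $\frac{N}{K}$ is a homomorphic image of $M$. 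Hence $M$ is co-Kasch.

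I do not anticipate a serious obstacle here, since the substantive combinatorial work — that a finite direct sum $M_1\oplus\cdots\oplus M_n$ epimorphically covering a module containing a simple $S$ forces $S$ to be a subfactor (equivalently, a homomorphic image, when $S$ is already a quotient) of a single summand — has already been carried out in Lemma~\ref{simplesubfactor}. The only point requiring mild care is the reduction in $(3)\Rightarrow(1)$ from "$M$-generated" to "homomorphic image of a single copy of $M$" for a simple module: this uses that a simple module is cyclic together with the same finiteness-plus-induction argument, so I would either repeat it briefly or explicitly cite the reasoning in the proof of Lemma~\ref{simplesubfactor}. Alternatively, one could observe directly that a simple $M$-generated module is always a homomorphic image of $M$ (a quotient of $M^{(I)}$ that is simple is a quotient of some $M^{(F)}$, and a simple quotient of a finite direct sum is a quotient of one summand), making $(2)$ and $(3)$ literally equivalent and collapsing the proof to $(1)\Leftrightarrow(2)$.
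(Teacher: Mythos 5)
Your proposal is correct and follows essentially the same route as the paper: $(1)\Rightarrow(2)$ via Lemma~\ref{simplesubfactor}, $(2)\Rightarrow(3)$ trivially, and $(3)\Rightarrow(1)$ by producing a nonzero map $M\to S$ from an $M$-generating epimorphism. The only difference is that the paper treats the last step as immediate (a nonzero coordinate restriction of $M^{(I)}\to S$ is already onto the simple $S$), whereas you re-run the finite-reduction/induction machinery — harmless but unnecessary, as your own closing remark already notes.
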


\begin{proof} $(1) \Rightarrow (2)$ Let $S$ be a simple module in $\sigma[M].$ Then $S$ is isomorphic to simple subfactor of $M$ by Lemma \ref{simplesubfactor}. Since $M$ is a co-Kasch module, $\Hom_R(M,\,S)\neq 0.$ That is, $S$ is a homomorphic image of $M$. This proves $(2).$

$(2) \Rightarrow (3)$ is clear. $(3) \Rightarrow (1)$ Let $S$ be a simple subfactor of $M.$ Then $S\in \sigma[M],$ and so there is a nonzero homomorphism $f: M \to S$ by $(3).$ Thus $\Hom_R(M, S)\neq 0,$ and so $M$ is a co-Kasch module.
 \end{proof}

Recall that,  given two right $R$-modules $P$ and $M$, the module $P$ is said to be $M$-projective if for any  epimorphism $g: M \to N$ and homomorphism $f: P \to N$, there exists $h: P \to M$ such that $f=gh.$

\begin{proposition}  Let  $P$ be an $M$-projective right $R$-module. Then $P$ is a co-Kasch module if and only if $P$ is a generator for the category $\sigma[M].$
\end{proposition}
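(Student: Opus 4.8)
The plan is to prove the two implications separately; only the forward direction requires work, and its engine is a trace argument reducing everything to simple modules. For ``$P$ a generator of $\sigma[M]$ implies $P$ co-Kasch'': a generator of $\sigma[M]$ belongs to $\sigma[M]$, so $\sigma[P]\subseteq\sigma[M]$; hence every simple subfactor of $P$ is a simple module of $\sigma[M]$ (Lemma \ref{simplesubfactor}) and so is $P$-generated, and being simple it is then a homomorphic image of $P$. Thus $P$ is co-Kasch (or: apply Proposition \ref{co-KaschsigmaM}).

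For the converse, assume $P$ is co-Kasch and --- as is forced if $P$ is to be a generator of $\sigma[M]$ --- that $P\in\sigma[M]$. Two reductions come first. (i) Since every module is the sum of its finitely generated submodules and $\operatorname{Tr}(P,N)\supseteq\operatorname{Tr}(P,N')$ whenever $N'\le N$, it suffices to show $P$ generates every finitely generated $N\in\sigma[M]$. (ii) Every such $N$ is isomorphic to a subfactor of $M^{n}$ for some positive integer $n$: after fixing an embedding $N\hookrightarrow M^{(I)}/K$, lifts of finitely many generators of $N$ involve only finitely many coordinates. Since $P$ is $M$-projective it is $M^{n}$-projective, and relative projectivity passes to submodules and to factor modules (\cite{AF}, \cite{wisbauer}); hence $P$ is projective relative to $N$, to $N/T$ for every $T\le N$, and to the attendant epimorphisms.

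Now fix such an $N$ and set $T=\operatorname{Tr}(P,N)$, the sum of the images of all homomorphisms $P\to N$; suppose toward a contradiction that $T\ne N$. The nonzero finitely generated module $N/T$ then has a maximal submodule, so there is an epimorphism $q\colon N/T\to S$ onto a simple module $S$. As a subfactor of $M^{n}$, $S$ is a simple module of $\sigma[M]$, hence by Lemma \ref{simplesubfactor} (and $P\in\sigma[M]$) is isomorphic to a simple subfactor of $P$; since $P$ is co-Kasch there is an epimorphism $p\colon P\to S$. Because $P$ is $(N/T)$-projective, $p$ lifts along $q$ to some $h\colon P\to N/T$ with $qh=p$, so $h\ne 0$; because $P$ is $N$-projective, $h$ lifts along the projection $\pi\colon N\to N/T$ to some $\widetilde h\colon P\to N$ with $\pi\widetilde h=h$. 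But $\operatorname{Im}\widetilde h\subseteq\operatorname{Tr}(P,N)=T=\ker\pi$, so $h=\pi\widetilde h=0$, a contradiction. Hence $T=N$, i.e.\ $P$ generates $N$; as $N$ was an arbitrary finitely generated module of $\sigma[M]$, $P$ generates $\sigma[M]$, and together with $P\in\sigma[M]$ this says $P$ is a generator of $\sigma[M]$.

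The main obstacle is the relative-projectivity bookkeeping: $M$-projectivity transfers freely only to \emph{finite} powers of $M$, which is exactly why $\sigma[M]$ must be cut down to its finitely generated objects (the subfactors of finite powers of $M$) before the two lifting steps above are available. Once that reduction is secured, the co-Kasch hypothesis enters at a single point, to present the simple quotient $S$ of $N/T$ as a homomorphic image of $P$.
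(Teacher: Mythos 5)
Your easy direction, and the trace-plus-relative-projectivity machinery in the converse, are fine as far as they go; the genuine gap is the single sentence ``As a subfactor of $M^{n}$, $S$ is a simple module of $\sigma[M]$, hence by Lemma \ref{simplesubfactor} (and $P\in\sigma[M]$) is isomorphic to a simple subfactor of $P$.'' Lemma \ref{simplesubfactor} applied to $P$ says that simple modules of $\sigma[P]$ are subfactors of $P$, so to use it you must first place $S$ in $\sigma[P]$. But $P\in\sigma[M]$ only gives $\sigma[P]\subseteq\sigma[M]$, which is the wrong inclusion; what you would need is $M\in\sigma[P]$, i.e.\ $\sigma[M]\subseteq\sigma[P]$, and that is essentially the conclusion you are trying to prove (if $P$ generates $\sigma[M]$ then $\sigma[P]=\sigma[M]$). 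The step really does fail: take $R=\Z$, $M=\Z/2\Z\oplus\Z/3\Z$, $P=\Z/2\Z$. Then $P$ is $M$-projective, co-Kasch and lies in $\sigma[M]$, yet the simple module $S=\Z/3\Z\in\sigma[M]$ is not a subfactor of $P$, there is no epimorphism $p\colon P\to S$ for your two lifting steps to start from, and indeed $P$ is not a generator of $\sigma[M]$. So the converse implication cannot be established in the stated generality; your argument (and the statement) becomes correct precisely when every simple module of $\sigma[M]$ is a subfactor of $P$, e.g.\ when $\sigma[M]=\sigma[P]$, in particular in the case $M=P$ emphasized in the paper's abstract.

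For comparison, the paper's own proof is a one-liner: by \cite[18.5]{wisbauer} an $M$-projective module $P$ is a generator for $\sigma[M]$ if and only if $\Hom_R(P,S)\neq 0$ for every simple $S\in\sigma[M]$, and then Proposition \ref{co-KaschsigmaM} is invoked; this quietly identifies the simples of $\sigma[M]$ with the simples of $\sigma[P]$, so the published argument has the same soft spot your step does --- your difficulty is inherited from the statement rather than created by you. Two smaller remarks: you added the hypothesis $P\in\sigma[M]$ in the converse, which is not in the statement (though it is forced if ``generator in $\sigma[M]$'' is meant in Wisbauer's sense); and once the hypothesis $\sigma[M]\subseteq\sigma[P]$ (or simply $M=P$) is added, your trace-and-double-lifting argument is a genuinely self-contained substitute for the citation of \cite[18.5]{wisbauer}, with the reduction to finitely generated objects correctly compensating for the fact that $M$-projectivity only passes to finite powers of $M$.
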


\begin{proof} By \cite[18.5]{wisbauer}, an $M$-projective right $R$-module $P$ is a generator for $\sigma[M]$ if and only if $\Hom_R(P, S)\neq 0$ for each simple module in $\sigma[M].$ Hence the conclusion follows by  Proposition \ref{co-KaschsigmaM}. 
\end{proof}

\begin{proposition}\label{prop:diretsumofco-Kasch} If $\{M_i \}_{i\in I}$ is a family of co-Kasch modules, for an index set $I$, then $\oplus_{i \in I}M_i$ is a co-Kasch module.
\end{proposition}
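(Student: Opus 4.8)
The plan is to verify the defining property directly: every simple subfactor $S$ of $M:=\bigoplus_{i\in I}M_i$ should be a homomorphic image of $M$, and since $S$ is simple it suffices to produce a single nonzero homomorphism $M\to S$.

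First I would carry out a routine reduction to a finite subsum. Suppose $S\subseteq N/K$ with $K\subseteq N\subseteq M$, and pick $n\in N$ such that $n+K$ is a nonzero element of $S$. Then $S=(n+K)R=(nR+K)/K\cong nR/(nR\cap K)$, so $S$ is already a subfactor of the cyclic module $nR$. As $n$ has finite support in the direct sum, $nR\subseteq M_F:=\bigoplus_{i\in F}M_i$ for some finite $F\subseteq I$; hence $S$ is a simple subfactor of $M_F$, a \emph{finite} direct sum of co-Kasch modules.

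The second step is to show that a simple subfactor of such a finite direct sum is a subfactor of one of the summands. This is precisely the inductive argument in the proof of Lemma \ref{simplesubfactor}, which I would re-run on $|F|$: write $M_F=A\oplus B$ with $A=M_{i_0}$ one of the summands and $B$ the sum of the others, let $\pi\colon nR\to nR/(nR\cap K)\cong S$ be the quotient map, and consider $\pi(nR\cap A)$. Since $S$ is simple this is either all of $S$, in which case $S$ is a homomorphic image of $nR\cap A\subseteq A$ and thus a subfactor of $M_{i_0}$; or it is $0$, in which case $nR\cap A=\Ker(\pi_B|_{nR})$ (with $\pi_B\colon M_F\to B$ the projection) is contained in $\Ker\pi$, so $\pi$ factors through the cyclic module $\pi_B(nR)\subseteq B$, and the induction hypothesis applies to this module, which is a subfactor of a direct sum with one fewer summand. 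The base case $|F|=1$ is immediate. Running this down yields an index $j\in F$ with $S$ isomorphic to a subfactor of $M_j$.

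Finally, since $M_j$ is co-Kasch, there is an epimorphism $M_j\twoheadrightarrow S$; composing it with the canonical projection $M\to M_j$ gives a nonzero homomorphism $M\to S$, so $S$ is a homomorphic image of $M$. As $S$ was an arbitrary simple subfactor, $M=\bigoplus_{i\in I}M_i$ is co-Kasch. I do not anticipate any real difficulty here; the only point requiring a little care is the passage from an arbitrary simple subfactor of the (possibly infinite) direct sum down to a subfactor of a single summand, and this is handled by the finite-support observation together with the induction already performed in Lemma \ref{simplesubfactor}, so one may equally well just invoke that lemma's technique rather than repeat it.
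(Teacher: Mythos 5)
Your proof is correct and follows essentially the same route as the paper: the paper likewise reduces a simple subfactor of $\bigoplus_{i\in I}M_i$ to a subfactor of a single summand by appealing to the inductive argument of Lemma \ref{simplesubfactor}, and then uses co-Kaschness of that summand together with the projection $\bigoplus_{i\in I}M_i\to M_j$ to produce a nonzero map onto $S$. Your version of the induction, phrased via the cyclic module $nR$ and the quotient map onto $S$, is only a mild streamlining of the same idea.
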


\begin{proof} Let $S$ be a simple subfactor of $\oplus_{i \in I}M_i$. Then $S \subseteq \frac{\oplus_{i \in I}M_i}{K}$ for some $K \subseteq \oplus_{i \in I}M_i.$ Set $N=\frac{\oplus_{i \in I}M_i}{K}$ and let $f: \oplus_{i \in I}M_i \to N$ be the canonical epimorphism.  Now, exactly the same arguments as in the proof of Lemma \ref{simplesubfactor} shows that $S$ is isomorphic to a subfactor of $M_i$ for some $i \in I$. Since $M_i$ is co-Kasch module, $\Hom_R(M_i,\,S)\neq 0$, and so $\Hom_R(\oplus_{i \in I}M_i,\,S)\neq 0.$ Thus the module $\oplus_{i \in I}M_i$ is co-Kasch.
\end{proof}

Co-Kasch modules are not closed under direct summands or under factor modules. For example, the $\Z$-module $\Z \oplus \Q$ is co-Kasch by Proposition \ref{prop:examplesHmodules}(1). On the other hand, $\Q$ is not a co-Kasch $\Z$-module since it has no any simple factor.

\begin{proposition} Let $M$ be right co-Kash module. Then $\frac{M}{K}$ is co-Kasch for every $K \subseteq \Rad(M).$
\end{proposition}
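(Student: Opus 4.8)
The plan is to show that every simple subfactor of $\frac{M}{K}$ is already a simple subfactor of $M$, and then transport a surjection from $M$ through the quotient map. Concretely, suppose $S$ is a simple subfactor of $\frac{M}{K}$, say $S \cong \frac{N/K}{L/K}$ where $K \subseteq L \subseteq N \subseteq M$. By the third isomorphism theorem this is just $\frac{N}{L}$, so $S$ is a simple subfactor of $M$ in the first place. Since $M$ is co-Kasch, there is an epimorphism $g\colon M \to S$. The point is to produce an epimorphism $\frac{M}{K} \to S$, i.e.\ to show $K \subseteq \Ker g$; once we have that, $g$ factors through the canonical projection $\pi\colon M \to \frac{M}{K}$ via the Factor Theorem, giving a nonzero (hence surjective, $S$ being simple) map $\frac{M}{K} \to S$, so $\Hom_R(\frac{M}{K},\,S) \neq 0$ and we are done.

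The key step is exactly the inclusion $K \subseteq \Ker g$. Here is where the hypothesis $K \subseteq \Rad(M)$ is used: $\Ker g$ is a \emph{maximal} submodule of $M$ (because $M/\Ker g \cong S$ is simple), and the Jacobson radical $\Rad(M)$ is the intersection of all maximal submodules of $M$, so $\Rad(M) \subseteq \Ker g$. Combined with $K \subseteq \Rad(M)$ this gives $K \subseteq \Ker g$, as needed. (One should note the degenerate case: if $M$ has no maximal submodules then $\Rad(M) = M$, but then $M$ co-Kasch forces $M$ to have no nonzero simple subfactors at all — so neither does $\frac{M}{K}$ — and the statement holds vacuously; in practice this case can be folded into the argument since the claim "$\Ext$/$\Hom$ condition holds for every simple subfactor" is then empty.)

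I do not expect a genuine obstacle here; the only thing to be careful about is the bookkeeping that a simple subfactor of a quotient $M/K$ really does descend to a simple subfactor of $M$ with the same isomorphism type, which is a clean application of the correspondence theorem, and the observation that $\Rad(M)$ lies in every maximal submodule. Both are standard, so the proof is short. If one wanted to streamline it even further, one could phrase it as: for any simple module $S$, a nonzero map $M \to S$ kills $\Rad(M) \supseteq K$, hence induces a nonzero map $\frac{M}{K}\to S$; so $M$ and $\frac{M}{K}$ have the same simple homomorphic images, while the simple subfactors of $\frac{M}{K}$ are among those of $M$ — and co-Kaschness of $M$ closes the loop.
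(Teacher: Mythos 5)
Your proposal is correct and follows essentially the same argument as the paper: a simple subfactor of $\frac{M}{K}$ is a simple subfactor of $M$, an epimorphism $M \to S$ has maximal kernel containing $\Rad(M) \supseteq K$, and hence factors through $\frac{M}{K}$ by the factor theorem. No gaps; the remark on the degenerate case $\Rad(M)=M$ is harmless but unnecessary, since the main argument already covers it.
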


\begin{proof} Let $S$ be a simple subfactor of $\frac{M}{K}$. Clearly, $S$ is a simple subfactor of $M$, as well. Then $\Hom_R(M, S)\neq 0,$ so there is a nonzero epimorphism $f:M \to S.$ Since $S$ is a simple module, $\Ker(f)$ is a maximal submodule of $M$. Thus $\Rad(M) \subseteq \Ker(f)$ and so, by factor theorem, there is a nonzero homomorphism $g: \frac{M}{K} \to S$ such that $g\pi=f,$ where $\pi: M \to \frac{M}{K}$ is the canonical epimorphism. Therefore $\Hom_R(\frac{M}{K}, S)\neq 0,$ and so $\frac{M}{K}$ is a co-Kasch module.
\end{proof}

\section{Rings whose modules are co-Kasch}

 In this section, we shall deal with the rings whose right modules are co-Kasch.  
   
 \begin{proposition}\label{prop:radM} The following are hold.
 \begin{enumerate}
 \item[(1)] If $M$ is a nonzero right co-Kasch module, then $\Rad(M) \neq M$. 
 \item[(2)] If $R$ is a local ring, then a nonzero right $R$-module $M$ is co-Kasch if and only if $\Rad(M) \neq M.$
 \item[(3)] If every right $R$-module is co-Kasch, then $R$ is a right max-ring.
 \end{enumerate}
 \end{proposition}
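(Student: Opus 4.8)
The plan is to prove (1) first and then obtain (2) and (3) as immediate consequences.

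For (1), I would argue by contradiction: suppose $M$ is a nonzero co-Kasch module with $\Rad(M) = M$. The key preliminary observation is that every nonzero module has a simple subfactor. Indeed, choosing $0 \neq m \in M$, the nonzero cyclic module $mR$ has a maximal submodule $K$ by Zorn's lemma, so $S := mR/K$ is a simple subfactor of $M$. Since $M$ is co-Kasch, $\Hom_R(M,\,S) \neq 0$, so there is a nonzero map $f \colon M \to S$; as $S$ is simple, $f$ is surjective and $\Ker f$ is a maximal submodule of $M$. Then $\Rad(M) \subseteq \Ker f$ and $\Ker f \neq M$, contradicting $\Rad(M) = M$.

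For (2), the ``only if'' direction is exactly (1). For ``if'', I would use that over a local ring $R$ the Jacobson radical $J(R)$ is the unique maximal right ideal, so $R/J(R)$ is, up to isomorphism, the only simple right $R$-module. Assuming $\Rad(M) \neq M$, the module $M$ has a maximal submodule and hence $\Hom_R(M,\,R/J(R)) \neq 0$; since every simple subfactor of $M$ is isomorphic to $R/J(R)$, every simple subfactor of $M$ is a homomorphic image of $M$, so $M$ is co-Kasch.

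For (3), recall that $R$ is a right max-ring precisely when $\Rad(M) \neq M$ for every nonzero right $R$-module $M$ (equivalently, every nonzero right module has a maximal submodule). So if every right $R$-module is co-Kasch, part (1) gives $\Rad(M) \neq M$ for all nonzero $M$, i.e.\ $R$ is a right max-ring. I do not anticipate any real obstacle here; the only step needing care is the existence of a simple subfactor of an arbitrary nonzero module used in (1), which rests on the standard fact that a nonzero cyclic module has a maximal submodule.
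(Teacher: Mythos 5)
Your proposal is correct and follows essentially the same route as the paper: pick $0\neq m\in M$, use that the cyclic module $mR$ has a maximal submodule to produce a simple subfactor, and then use the co-Kasch property to get a maximal submodule of $M$, with (2) and (3) following exactly as in the paper's proof. The only cosmetic difference is that you phrase (1) as a contradiction while the paper argues directly.
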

 
  \begin{proof}$(1)$  Let $N$ be a nonzero right $R$-module. Let $0 \neq x \in N.$ Then, $xR$ has a maximal submodule say $K$. Then $S=\frac{xR}{K}$ is a simple subfactor of $N$. By the hypothesis, $N$ is a co-Kasch module and so $\Hom_R(N,\,S)\neq 0.$ Therefore $N$ has a maximal submodule, and so $\Rad(N)\neq N.$

$(2)$ Necessity follows from $(1).$ Suppose that $\Rad(M) \neq M.$ Then $M$ has a simple factor. Since $R$ is local,  $\frac{R}{J(R)}$ is the  unique simple right $R$-module up to isomorphism. Then $\Hom_R(M,\, \frac{R}{J(R)})\neq 0.$ Thus $M$ is co-Kasch.

$(3)$ Let $M$ be a nonzero right $R$-module. If $M$ is co-Kasch then $\Rad(M) \neq M$ by $(1).$ Thus, every nonzero right $R$-module has a maximal submodule. Hence $R$ is a right max-ring.

\end{proof}
 
 %Now we shall characterize the rings all of whose right modules are co-Kasch. We recall the following definitions. 

%\textbf {burada tanımları yazmayı unutmalayalım}
%\begin{definition}

%\begin{enumerate}

%\item[(a)] A ring $R$ is said to be a right $H$ ring if the injective hulls of nonisomorphic simple right $R$-modules are homologically independent, that is, $Hom(E(S_1),\,E(S_2))=0$ for each nonisomorphic simple right $R$-modules $S_1$ and $S_2$. Commutative Noetherian rings, and commutative semiartinian rings are $H$-rings (see, \cite{camillohrings}). Thus, commutative perfect rings are $H$-ring, as left perfect rings are right semiartinian. Right artinian rings that are right $H$-ring are characterized in \cite{papp}. \item[(b)] A ring $R$ is said to be right max if every nonzero right $R$-module has a maximal submodule. 
%\end{enumerate}
%\end{definition}

In the following theorem we give some sufficient conditions for the rings whose right modules are co-Kasch.

\begin{theorem}\label{prop:allmodulesareHmodules} Consider the following statements for a ring $R.$   
\begin{enumerate}
\item[(1)] $R$ is a right $H$-ring and a right max-ring.
\item[(2)] Every right $R$-module is a co-Kasch module.  
%\item[(2')] For each simple right $R$-module $S$, every submodule of $E(S)$ is an co-Kaschmodule
\item[(3)] Right co-Kasch modules are closed under factor modules.
\item[(4)] Right co-Kasch modules are closed under submodules.
\item[(5)] Right co-Kasch modules are closed under direct summands.

\end{enumerate}

Then $(1) \Rightarrow (2) \iff (3) \iff (4) \iff (5)$
\end{theorem}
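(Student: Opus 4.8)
The plan is to dispatch the equivalences $(2)\iff(3)\iff(4)\iff(5)$ quickly and then devote the real work to $(1)\Rightarrow(2)$. Each of $(2)\Rightarrow(3)$, $(2)\Rightarrow(4)$, $(2)\Rightarrow(5)$ is immediate: if \emph{every} right $R$-module is co-Kasch, then any closure property of the class of co-Kasch modules holds vacuously. For the converses I would exploit that $M\oplus R$ is co-Kasch for every right $R$-module $M$ (Proposition~\ref{prop:examplesHmodules}(1)) and that free modules are co-Kasch (Proposition~\ref{prop:examplesHmodules}(2)). Given an arbitrary right $R$-module $M$: it is a direct summand of the co-Kasch module $M\oplus R$, which gives $(5)\Rightarrow(2)$; it is a submodule of $M\oplus R$, which gives $(4)\Rightarrow(2)$; and it is a homomorphic image of a free, hence co-Kasch, module, which gives $(3)\Rightarrow(2)$. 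This closes the cycle.

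For $(1)\Rightarrow(2)$, assume $R$ is a right $H$-ring and a right max-ring, fix a right $R$-module $M$ (which we may assume nonzero), and let $S$ be a simple subfactor of $M$, written $S\cong N/K$ with $K\subseteq N\subseteq M$. The first step I would isolate is a purely $H$-ring statement: \emph{every simple subfactor of $E(S)$ is isomorphic to $S$}. Indeed, if $T\cong P/Q$ with $Q\subseteq P\subseteq E(S)$, then the composite $P\twoheadrightarrow P/Q\cong T\hookrightarrow E(T)$ extends, by injectivity of $E(T)$, to a homomorphism $E(S)\to E(T)$ whose restriction to $P$ is nonzero; since $R$ is right $H$, this forces $S\cong T$. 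The second step produces a quotient of $M$ sitting inside $E(S)$: embed $N/K\cong S$ into $E(S)$ and extend along $N/K\hookrightarrow M/K$, using injectivity of $E(S)$, to a map $M/K\to E(S)$ which is nonzero on $N/K$; precomposing with the projection $M\twoheadrightarrow M/K$ yields a nonzero $h\colon M\to E(S)$, and we set $L:=h(M)\neq 0$. The third step invokes the max-ring hypothesis: $L$ has a maximal submodule $L'$, so $L/L'$ is a simple subfactor of $E(S)$ (as $L'\subseteq L\subseteq E(S)$), hence $L/L'\cong S$ by the first step. Then $M\xrightarrow{h}L\twoheadrightarrow L/L'\cong S$ is a nonzero homomorphism, so $\Hom_R(M,S)\neq 0$; since $S$ was an arbitrary simple subfactor of $M$, $M$ is co-Kasch.

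The main obstacle is the first step — showing that a right $H$-ring admits no simple subfactor of $E(S)$ other than copies of $S$; everything else is routine manipulation of injective extensions together with the defining property of a max-ring. The two places requiring a little care are: verifying that the extension $E(S)\to E(T)$ is genuinely nonzero, which holds because its restriction to $P$ is the surjection $P\to T$ followed by an embedding; and verifying that $M\to L\to L/L'$ is nonzero, which holds because it is an epimorphism onto $L$ followed by the canonical surjection. Note also that the max-ring hypothesis is genuinely needed here: without it a module may satisfy $\Rad(M)=M$ and fail to be co-Kasch (for instance $\Q$ as a $\Z$-module), so the passage to the maximal submodule $L'$ of $L$ is the essential use of $(1)$.
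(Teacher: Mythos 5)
Your proposal is correct and takes essentially the same route as the paper: the equivalences are handled via $M\oplus R$ (and free modules) exactly as in the paper's proof, and for $(1)\Rightarrow(2)$ you likewise produce a nonzero map $M\to E(S)$ by injectivity, use the max-ring hypothesis to find a maximal submodule of its image, and use the $H$-ring property to identify the resulting simple factor with $S$. Your explicit first step (every simple subfactor of $E(S)$ is isomorphic to $S$ over a right $H$-ring) is precisely the justification the paper states tersely in this proof and spells out later in Proposition~\ref{prop:H-ringcharacterization}.
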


\begin{proof} 
%$(2) \Rightarrow (1)$ Suppose every right $R$-module is an co-Kaschmodule. Then $R$ is a right $H$-ring by Proposition \ref{prop:H-ringcharacterization}. Since every nonzero right module has a simple subfactor, every nonzero module has a simple factor module. Thus $R$ is right max-ring. This proves $(2).$

$(1) \Rightarrow (2)$ Let $M$ be a right $R$-module and $S=\frac{N}{K}$ be a simple subfactor of $M$. Then there is a nonzero homomorphism $g: \frac{M}{K} \to E(S)$ such that the following diagram is commutative.  
$$\xymatrix{0 \ar[r]  &  \frac{N}{K} \ar[d]_{f} \ar[r]^{i}& \frac{M}{K} \ar@{.>}[dl]^{g} &\ar[l]_{\pi}M\\
 & E(S) & &
} $$

Thus $g\pi :M \to E(S)$ is a nonzero homomorphism, where $\pi: M \to \frac{M}{K}$ is the canonical epimorphism. Then for $L=\Ker(g\pi)$, we have $\frac{M}{L} \cong Im(g \pi ) \subseteq E(S)$. Since $R$ is a right max-ring, $\frac{M}{L}$ has a simple factor say $X$. Then $X\cong S$ by the right $H$-ring assumption. Hence, $M$ has a simple factor isomorphic to $S$, and so $M$ is a right co-Kasch module. This proves (2).

%$(2) \Rightarrow (1)$  Assume that $R$ is not a right $H$-ring. Then there are two nonisomorphic simple right $R$-modules $S_1$ and $S_2$ such that $\Hom_R(E(S_1),\,E(S_2)) =0.$ Let $f: E(S_1) \to E(S_2)$ be a nonzero homomorphism. Then $\frac{E(S_1)}{\Ker(f)} \cong Im(f),$ and so there are submodules $A \subseteq E(S_1)$ and $A_1 \subseteq A$ such that $\frac{A}{A_1} \cong S_2.$ By $(2)$, $A$ is a co-Kasch module, and as $S_1 \subseteq A$ there is a submodule $B_1 \subseteq A$ such that $\frac{A}{B_1} \cong S_1.$ Now $A_1$ and $B_1$ are distinct maximal submodules of $A$ and so $A=A_1 +B_1$ and $$S_1\cong \frac{A}{B_1} \cong \frac{A_1 +B_1}{B_1} \cong \frac{A_1}{A_1\cap B_1} .$$ Consider the right $R$-module $M=\frac{A}{A_1 \cap B_1}.$ Then we have a short exact sequence $$0\to S_1 \to M \to S_2\to 0$$ from which we see that $S_1$ embeds in $M$ and $\Hom_R(M,\,S_1)=0$. Thus $M$ is not an co-Kasch module. This contradicts $(2).$ Hence $R$ must be a right $H$-ring.

%To prove that $R$ is  right max-ring, consider a nonzero right $R$-module $N$. Then, as $N$ is nonzero $N$ has a simple subfactor, say $S.$ By $(2)$, $N$ is a co-Kasch  %module and so $\Hom_R(N,\,S)\neq 0.$ Therefore $N$ has a maximal submodule. This implies that $R$ is a right max-ring. This completes the proof.

$(2) \Rightarrow (3)  \Rightarrow (5)$ are clear. 
$(5) \Rightarrow (2)$  Let $M$ be a right $R$-module. Then $M\oplus R$ is a right co-Kasch module  by Proposition \ref{prop:examplesHmodules}. Then $M$ is a right co-Kasch module by $(5)$. Therefore, every right $R$-module is a co-Kasch module.

$(2) \Rightarrow (4) \Rightarrow (5)$ are clear.
\end{proof}

 A ring $R$ is said to be right co-noetherian if the injective hull of each simple right $R$-module is artinian. Right $V$-rings and $QF$-rings are trivial examples of right co-noetherian rings. Commutative noetherian rings are co-noetherian (see \cite{matlis}). A ring $R$ is said to  satisfy the $(\diamond)$ property if $E(S)$ is locally artinian for each simple right $R$-module $S.$ Right co-noetherian and right artinian rings satisfy $(\diamond).$ For characterizations of certain rings and  algebras that satisfy the  $(\diamond)$ property we refer to \cite{diamond1}, \cite{diamond}, \cite{diamond2}, \cite{diamond3}.
 
We do not know whether the statement $(1)$  in Theorem \ref{prop:allmodulesareHmodules} is necessary for the implication $(2)$, in general.  For certain rings including the rings satisfy $(\diamond)$, right noetherian rings and local rings the assumption that all right modules are co-Kasch implies  the ring is right $H$-ring and right max-ring.
We begin with the following.
 \begin{theorem}\label{diamond}The following are equivalent for a  ring $R$ that satisfy $(\diamond).$
\begin{enumerate}
\item[(1)] $R$ is a right $H$-ring and right max-ring.
\item[(2)] Every right $R$-module is co-Kasch.
\item[(3)]  Every subfactor of $E(S)$ is  co-Kasch, for each simple right $R$-module $S.$

\end{enumerate}
\end{theorem}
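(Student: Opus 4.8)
The plan is to prove the cycle $(1)\Rightarrow(2)\Rightarrow(3)\Rightarrow(1)$. The implication $(1)\Rightarrow(2)$ is already available from Theorem~\ref{prop:allmodulesareHmodules}, and $(2)\Rightarrow(3)$ is immediate since every subfactor of $E(S)$ is a right $R$-module. So the entire content is the implication $(3)\Rightarrow(1)$, and this is where I expect the main difficulty to lie; the hypothesis $(\diamond)$ must be used precisely here.

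For $(3)\Rightarrow(1)$, I would argue in two parts. First, to show $R$ is a right max-ring, it suffices to show every nonzero right $R$-module $N$ has a maximal submodule, equivalently $\Rad(N)\neq N$. Pick $0\neq x\in N$; then $xR$ has a simple factor $S$, and $xR/\Rad(xR)$ is semisimple and nonzero, so in particular there is a simple subfactor $S$ of $N$. Now I want to produce a maximal submodule of $N$. The trick is to embed a suitable subfactor into $E(S)$: consider the cyclic module $xR$, map it into $E(S)$ via a nonzero homomorphism $xR\to S\hookrightarrow E(S)$, and look at the image $C=\Image(xR\to E(S))$, which is a finitely generated—hence, by $(\diamond)$, artinian—submodule of $E(S)$, so $C$ is a subfactor of $E(S)$. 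By $(3)$, $C$ is co-Kasch, and by Proposition~\ref{prop:radM}(1) a nonzero co-Kasch module has $\Rad(C)\neq C$. But more is needed: I must deduce that $N$ itself has a maximal submodule. Actually it is cleaner to first establish the right $H$-ring condition and then handle max-ness, or to realize that every module over a ring satisfying $(\diamond)$ whose subfactors of $E(S)$ are all co-Kasch must itself be co-Kasch, then invoke Proposition~\ref{prop:radM}(3). So the strategy is: show $(3)$ forces every right $R$-module to be co-Kasch.

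To get $(3)\Rightarrow(2)$: let $M$ be an arbitrary right $R$-module and $S=N/K$ a simple subfactor of $M$. I want a nonzero homomorphism $M\to S$. Since $\Hom_R(M,S)\to\Hom_R(M,E(S))$ and injectivity of $E(S)$ gives a nonzero $g:M\to E(S)$ extending the composite $M\twoheadrightarrow M/K\hookleftarrow N/K=S\hookrightarrow E(S)$ suitably (using the pushout/diagram as in Theorem~\ref{prop:allmodulesareHmodules}). Let $C=\Image(g)\subseteq E(S)$. Then $M/\Ker(g)\cong C$. Now $C$ contains a copy of $S$ (namely the image of $N/K$), and $C$ is a subfactor of $E(S)$. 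Here is the key point: $C$ need not be finitely generated, but by Proposition~\ref{prop:diretsumofco-Kasch}-type reasoning together with $(\diamond)$, every subfactor of $E(S)$ is locally artinian, hence is a directed union of its finitely generated (artinian) submodules, each of which is a subfactor of $E(S)$ and hence co-Kasch by $(3)$. A directed union of co-Kasch modules need not be co-Kasch in general, but here I can instead argue directly: $C$ is co-Kasch provided I show $(3)$ gives it—and in fact I claim $(3)$ should be read as applying to all subfactors, including infinitely generated ones, so $C$ is co-Kasch. Then $S$ is a simple subfactor of $C$, so $\Hom_R(C,S)\neq 0$, giving a nonzero $C\to S$; composing with $M\twoheadrightarrow C$ yields $\Hom_R(M,S)\neq 0$, as wanted. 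Finally, $(2)\Rightarrow(1)$: Proposition~\ref{prop:radM}(3) gives right max-ring, and the right $H$-ring condition follows by a standard argument—if $S_1\not\cong S_2$ are simple and $0\neq h:E(S_1)\to E(S_2)$, then $\Image(h)$ is a subfactor of $E(S_2)$ containing a copy of $S_1$ (since $h$ is injective on $S_1$), so $S_1$ is a simple subfactor of the co-Kasch module $E(S_1)$... one must instead test co-Kaschness of $E(S_1)$: $E(S_1)$ is co-Kasch by $(2)$, $S_1$ is clearly a homomorphic image obstruction issue—the right move is that $E(S_1)$ co-Kasch together with $\Image h\cong E(S_1)/\Ker h$ being a subfactor of $E(S_2)$ forces, via $(\diamond)$ on $E(S_2)$, a simple factor of $\Image h$ isomorphic to $S_1$ inside a subfactor of $E(S_2)$, and then iterating the injective-hull argument shows $S_1$ embeds in $E(S_2)$, contradicting $S_1\not\cong S_2$.

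The main obstacle will be handling the direct-limit/local-artinian passage cleanly in $(3)\Rightarrow(2)$: a directed union of co-Kasch modules is not obviously co-Kasch, so one must either interpret hypothesis $(3)$ as covering all (not merely finitely generated) subfactors of $E(S)$—which is the natural reading and makes the argument go through immediately—or else exploit that a simple subfactor of a locally artinian module already appears as a subfactor of some finitely generated (hence artinian) submodule, so that co-Kaschness need only be invoked on artinian subfactors. I would adopt the latter, more careful route: given the simple subfactor $S$ of $C\subseteq E(S)$, find a finitely generated submodule $C_0\subseteq C$ admitting $S$ as a subfactor, note $C_0$ is artinian by $(\diamond)$ and is a subfactor of $E(S)$, apply $(3)$ to get $\Hom_R(C_0,S)\neq 0$; but then I need $\Hom_R(C,S)\neq 0$, which requires lifting a map off $C_0$ to $C$—so in the end one really does want $(3)$ applied to $C$ directly. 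The honest resolution is that $C$, being a subfactor of $E(S)$, is covered by hypothesis $(3)$ verbatim, so $\Hom_R(C,S)\neq 0$ directly; the local-artinian hypothesis $(\diamond)$ is what is used in the converse-type arguments and in locating simple factors, not in this step. I expect the write-up to be short once this reading is fixed.
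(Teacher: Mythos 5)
Your reduction is fine as far as it goes: $(1)\Rightarrow(2)$ is Theorem~\ref{prop:allmodulesareHmodules}, $(2)\Rightarrow(3)$ is trivial, and your direct argument for $(3)\Rightarrow(2)$ is correct --- extend $S=N/K\hookrightarrow E(S)$ to $\bar g\colon M/K\to E(S)$ by injectivity, put $C=\Img(\bar g\pi)$, note $S\subseteq C\subseteq E(S)$ so that $C$ is literally a subfactor of $E(S)$ and hypothesis $(3)$ applies to it verbatim (your worry about directed unions and finite generation is a red herring you correctly discard), then compose a nonzero $C\to S$ with $M\twoheadrightarrow C$. The max-ring half of $(2)\Rightarrow(1)$ via Proposition~\ref{prop:radM}(3) is also fine, and observing that $(2)\Leftrightarrow(3)$ holds over any ring is a mild streamlining of the paper, whose own proof goes from $(3)$ to $(1)$ directly.

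The genuine gap is the right $H$-ring half of $(2)\Rightarrow(1)$, which is the heart of the theorem and the only place $(\diamond)$ is needed; your sketch of it is not a proof and starts from a false statement. If $S_1\not\cong S_2$ are simple and $0\neq h\colon E(S_1)\to E(S_2)$, then $h$ is \emph{not} injective on $S_1$: a nonzero $h(S_1)$ would be a simple submodule of $E(S_2)$, hence would meet, and therefore equal, the essential simple $S_2$, forcing $S_1\cong S_2$. So in fact $h(S_1)=0$, i.e.\ $S_1\subseteq\Ker h$ --- the opposite of what you assert --- and for the same reason $S_1$ can never embed in $E(S_2)$, so the contradiction you aim for (``iterating the injective-hull argument shows $S_1$ embeds in $E(S_2)$'') cannot be the endpoint of any correct argument; nothing concrete is offered in its place. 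The paper's proof of this step works entirely inside $E(S_1)$: set $A_1=f^{-1}(S_2)$ (taken cyclic, hence artinian by $(\diamond)$ --- this is exactly where the hypothesis enters) and $K_1=\Ker f\supseteq S_1$; co-Kaschness of the subfactor $K_1$ gives an epimorphism $g_1\colon K_1\to S_1$; pushing out $0\to K_1\to A_1\to S_2\to 0$ along $g_1$ and applying co-Kaschness of the pushout $B_1$ shows the extension $0\to S_1\to B_1\to S_2\to 0$ splits (otherwise $S_2$ would map onto $S_1$); the $\Ext$ long exact sequence associated with $0\to K_2\to K_1\to S_1\to 0$, $K_2=\Ker g_1$, then replaces $A_1$ by an extension of $S_2$ by $K_2$, one checks $S_1\subseteq K_2$, and iterating produces a descending chain $K_1\supseteq K_2\supseteq\cdots\supseteq S_1$ inside the artinian module $A_1$ which must terminate with some $K_n=S_1$, making $S_1$ a direct summand of $A_n$ and contradicting the essentiality of $S_1$ in $E(S_1)$. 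Your proposal contains no mechanism of this kind (nor any substitute), so as written it establishes only $(2)\Leftrightarrow(3)$ and the max-ring statement, not the theorem.
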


\begin{proof} $(1) \Rightarrow (2)$ By Theorem \ref{prop:allmodulesareHmodules}, and $(2) \Rightarrow (3)$ is clear. 

$(3) \Rightarrow (1)$ First, let us prove that $R$ is a right $H$-ring. Assume the contrary. Then there are two nonisomorphic simple right $R$-modules $S_1$ and $S_2$ 
with $\Hom_R(E(S_1),\,E(S_2)) \neq 0.$ Let $f: E(S_1) \to E(S_2)$ be a nonzero homomorphism. Since $S_2$ is essential in $E(S_2),$ $S_2$ is essential in $\Img(f).$ 
Denote $\Ker(f)$ by $K_1.$ Let $A_1=f^{-1} (S_2).$ Since $S_2$ is cyclic, without loss of generality, we may assume that $A_1$ is a cyclic module as well.  Since $S_1 \ncong  S_2$ and $S_2$ is essential in $E(S_2),$ $f(S_1)=0,$ so $S_1 \subseteq K_1 \subseteq A_1.$ By (2),  $K_1$ is a co-Kasch module, therefore there is a nonzero homomorphism $g_1: K_1 \to S_1,$ which is clearly an epimorphism. Taking a pushout diagram we will have the following commutative diagram with exact rows and columns.

$$\xymatrix{  & 0 &  &  & \\  
E' :0 \ar[r]  & S_1 \ar[u] \ar[r]_{h}  & B_1  \ar[r]^{k}  \ar@{.>}@/_/[l]_{s} & S_2  \ar[r] \ar@{=}[d] & 0\\  
E_1 :0 \ar[r]  & K_1 \ar[u]^{g_1} \ar[r]  & A_1  \ar[u] \ar[r]   & S_2   \ar[r] & 0\\
 & K_2 \ar[u]^{i_1} &  &  & \\
 & 0 \ar[u] &  &  &
}$$
where $K_2= \Ker (g_1)$ and $i_1 : K_2 \to K_1$ is the inclusion map. 
Since $B_1$ is a co-Kasch module there is a nonzero homomorphism $s:B_1 \to S_1.$ If $\Ker(k)=\Img(h) \subseteq \Ker (s),$ then there is an epimorphism $S_2 \to S_1$ by the factor theorem, contradiction. So $\Img(h)   \nsubseteq   \Ker (s),$ therefore $sh: S_1 \to S_1$ is an isomorphism, that is $E'$ is splitting. If $K_1=S_1,$ then $g_1=1_{S_1},\,\,B_1=A_1,$ so $S_1$ is a direct summand of $A_1$, a contradiction. Let $K_1 \neq S_1.$ From the long exact sequence 

$$\xymatrix@1{ \cdots \ar[r] & \Ext_R( S_2,\, K_2) \ar[r]^{i_{1_*} } & \Ext_R( S_2,\, K_1) \ar[r]^{g_{1_*} } & \Ext_R( S_2,\, S_1)   \ar[r]  & \cdots }$$
induced by the short exact sequence 
$\xymatrix@1{E'': 0 \ar[r]  & K_2 \ar[r]^{i_1}  & K_1 \ar[r]^{g_1}  & S_1  \ar[r] & 0},$
we obtain that $E_1 \in \Ker (g_{1_*})= \Img (i_{1_*}),$ therefore $E_1= i_{1_*}(E_2)$ for some $$\xymatrix@1{E_2 : 0 \ar[r]  & K_2 \ar[r]  & A_2 \ar[r]  & S_2  \ar[r] & 0.}$$
If $S_1 \nsubseteq K_2$ then $E''$ is splitting, so $K_1$ contains a direct summand $X$ isomorphic to $S_1.$ Then either $X\cap S_1=0$ or $X=S_1.$ Both cases contradicts with the fact that $S_1$ is essential in $K_1,$ and so $S_1 \subseteq K_2.$ Since $K_2$ is a co-Kasch module, there is an epimorphism $g_2: K_2 \to S_1.$ In  a similar way we will have the following  commutative diagram with exact rows and columns, first row of which is splitting
 
$$\xymatrix{  & 0 &  &  & \\  
0 \ar[r]  & S_1 \ar[u] \ar[r]  & B_2  \ar[r] & S_2  \ar[r] \ar@{=}[d] & 0\\  
0 \ar[r]  & K_2 \ar[u] \ar[r]  & A_2  \ar[u] \ar[r]   & S_2   \ar[r] & 0\\
 & K_3 \ar[u] &  &  & \\
 & 0 \ar[u] &  &  &
}$$
Continuing in this way we will have a descending chain $$A_1 \supseteq K_1 \supseteq K_2 \supseteq \cdots \supseteq S_1 $$and corresponding exact diagrams. Since $E(S_1) $   is locally artinian and $A_1$ is cyclic, $A_1$ artinian. Thus $K_n=S_1$ for some positive integer $n,$ and so $S_1$ is a direct summand in $A_n,$ a contradiction. Therefore, $R$ is a right $H$-ring.

Now let us prove that $R$ is right max-ring as well. For this, suppose the contrary that there is a nonzero right $R$-module $M$ such that $\Rad(M)=M.$ Let $\mathcal{S}$ be the set of representatives of nonisomorphic simple right $R$-modules. Then $E=\oplus_{S \in \mathcal{S}}E(S)$ is an injective cogenerator for the category of right $R$-modules. 
Thus there is a monomorphism $f: M \to E^I$ for some index set $I.$ Since $f$ is nonzero, there is a nonzero homomorphim $g:M \to E(S)$   for some $S \in \mathcal{S}.$ 
Then $\frac{M}{\Ker(g)} \cong g(M)\subseteq E(S)$ implies that, $\Rad(g(M))=g(M)$. Thus, $g(M)$ is not a co-Kasch module. This contradicts the assumption $(2)$.  
Therefore, $\Rad (M)\neq M$ for every right $R$-module $M$. Hence $R$ is a right max-ring. 
\end{proof}

A ring $R$ is said to be  right $V$-ring if every simple right $R$-module is injective.

\begin{corollary} Over a right $V$-ring, every right $R$-module is a co-Kasch module.
\end{corollary}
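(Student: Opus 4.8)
The claim to prove is the corollary: over a right $V$-ring, every right $R$-module is co-Kasch.

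My plan: the cleanest route is to show that a right $V$-ring satisfies the hypotheses of an earlier result and invoke it. A right $V$-ring has the property that every simple right $R$-module $S$ is injective, so $E(S) = S$. This immediately gives two facts. First, $R$ is a right $H$-ring: for nonisomorphic simple modules $S_1, S_2$, we have $\Hom_R(E(S_1), E(S_2)) = \Hom_R(S_1, S_2) = 0$ since a nonzero map between simple modules is an isomorphism. Second, $R$ is a right max-ring: over a right $V$-ring every module has zero radical (if $M$ had $\Rad(M) = M$, then $\Hom_R(M, S) = 0$ for every simple $S$, but $M$ embeds in a product of injective hulls of simples, i.e.\ a product of simples, so some such Hom is nonzero unless $M = 0$), hence every nonzero module has a maximal submodule. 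Also $R$ satisfies $(\diamond)$ trivially since $E(S) = S$ is artinian, even semisimple.

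Then I would apply Theorem \ref{prop:allmodulesareHmodules}, implication $(1) \Rightarrow (2)$: since $R$ is a right $H$-ring and a right max-ring, every right $R$-module is co-Kasch. Alternatively Theorem \ref{diamond} applies directly as well. This is essentially a one-line deduction once the two ring-theoretic properties are established.

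There is an even more direct argument worth mentioning that avoids the machinery: let $M$ be any right $R$-module and $S = N/K$ a simple subfactor. Since $S$ is injective, the inclusion $S \hookrightarrow M/K$ splits, so $S$ is a direct summand of $M/K$, giving a nonzero (indeed surjective) map $M/K \to S$; composing with the canonical projection $M \to M/K$ yields $\Hom_R(M, S) \neq 0$. Hence $M$ is co-Kasch by Proposition 2.2. The main (and only real) obstacle is deciding which packaging to present; there is no genuine difficulty here, as the statement is a straightforward specialization. I would present the short direct proof, perhaps remarking that it also follows from Theorem \ref{prop:allmodulesareHmodules} because right $V$-rings are right $H$-rings and right max-rings.

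\begin{proof}
Let $M$ be a right $R$-module and let $S = N/K$ be a simple subfactor of $M$, where $K \subseteq N \subseteq M$. Since $R$ is a right $V$-ring, $S$ is injective, so the inclusion $N/K \hookrightarrow M/K$ splits. Hence there is an epimorphism $p: M/K \to S$, and composing with the canonical projection $\pi: M \to M/K$ gives a nonzero homomorphism $p\pi: M \to S$. Therefore $\Hom_R(M, S) \neq 0$ for every simple subfactor $S$ of $M$, and $M$ is a co-Kasch module. (Alternatively, a right $V$-ring is a right $H$-ring, since $\Hom_R(E(S_1), E(S_2)) = \Hom_R(S_1, S_2) = 0$ for nonisomorphic simple modules $S_1, S_2$, and a right max-ring, so the conclusion also follows from Theorem \ref{prop:allmodulesareHmodules}.)
\end{proof}
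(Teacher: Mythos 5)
Your proof is correct, but your primary argument takes a different and more elementary route than the paper. The paper proves the corollary by checking that a right $V$-ring is a right $H$-ring (simple modules are injective, and a nonzero map between simples is an isomorphism) and a right max-ring (every module over a $V$-ring has zero radical), and then invokes the general machinery of Theorem \ref{diamond} (equivalently Theorem \ref{prop:allmodulesareHmodules}, $(1)\Rightarrow(2)$) — exactly the route you sketch in your parenthetical remark. Your main proof instead argues directly: a simple subfactor $S=N/K$ is injective, so the inclusion $N/K\hookrightarrow M/K$ splits, and composing the resulting retraction $M/K\to S$ with the canonical map $M\to M/K$ gives $\Hom_R(M,S)\neq 0$, whence $M$ is co-Kasch by Proposition 2.2. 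This direct argument is shorter and completely self-contained, avoiding the nontrivial proof of Theorem \ref{prop:allmodulesareHmodules}; the paper's packaging, on the other hand, situates $V$-rings inside the ``right $H$-ring plus right max-ring'' class that the section is organized around. Both are valid; your version buys simplicity, the paper's buys uniformity with the surrounding results.
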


\begin{proof}Let $R$ be a right $V$-ring. Every simple right $R$-module is injective, so $R$ is a right $H$-ring. On the other hand, $\Rad (M)=0$, for every right $R$-module $M$ (see \cite{Lammodules}). Thus $R$ is a right max-ring. Now the proof follows by Theorem \ref{diamond}.

\end{proof}

\begin{corollary}Let $R$ be a right noetherian ring. The following are equivalent.

\begin{enumerate}
\item[(1)] Every right $R$-module is co-Kasch.
\item[(2)] $R$ is right artinian right $H$-ring.
\end{enumerate}
\end{corollary}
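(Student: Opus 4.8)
The plan is to deduce this Corollary from Theorem \ref{diamond} by checking that a right noetherian ring automatically satisfies the $(\diamond)$ property, and that under the noetherian hypothesis a right $H$-ring which is also a right max-ring must in fact be right artinian. For the implication $(2) \Rightarrow (1)$: if $R$ is right artinian then $R$ is right noetherian and satisfies $(\diamond)$ (as noted in the paragraph preceding Theorem \ref{diamond}, right artinian rings satisfy $(\diamond)$), and a right artinian ring is a right max-ring since every nonzero module has nonzero socle, hence a simple submodule, hence (dually) a maximal submodule — more directly, every nonzero module over a right perfect ring has a maximal submodule. So $R$ is a right $H$-ring and right max-ring, and Theorem \ref{diamond} gives that every right $R$-module is co-Kasch.

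For the converse $(1) \Rightarrow (2)$: first I would observe that a right noetherian ring satisfies $(\diamond)$. Indeed, over a right noetherian ring the injective hull $E(S)$ of a simple module is the union of its finitely generated submodules, each of which is noetherian; but one needs artinian, so the correct observation is that over a right noetherian ring $E(S)$ is in fact artinian when $R$ is, which is circular — instead I would use that a commutative noetherian ring is co-noetherian, but here $R$ need not be commutative. The cleaner route: $R$ right noetherian implies finitely generated right modules are noetherian, so to get $(\diamond)$ I argue that each finitely generated submodule $N$ of $E(S)$ has essential simple socle (being an essential extension inside $E(S)$) and is noetherian, and a noetherian module with essential simple socle is artinian (a finitely generated module over a right noetherian ring with zero radical quotient... ) — actually the standard fact is: a noetherian module whose socle is essential and finitely generated is artinian. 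Since $\Soc(N) = N \cap S = S$ is simple and essential in $N$, and $N$ is noetherian, $N$ is artinian. Hence $E(S)$ is locally artinian, so $R$ satisfies $(\diamond)$.

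Given $(\diamond)$, Theorem \ref{diamond} applies: if every right $R$-module is co-Kasch then $R$ is a right $H$-ring and a right max-ring. It remains to upgrade ``right noetherian $+$ right max-ring'' to ``right artinian.'' Here I would invoke the theorem of Bass--Renault--type (a ring is right perfect iff it is semilocal and $J(R)$ is right T-nilpotent; and a right max-ring is right perfect when combined with suitable finiteness) — more precisely, it is a classical result that a right noetherian right max-ring is right artinian (this is essentially due to the fact that a right max-ring is semilocal, and a right noetherian semilocal ring with $J(R)$ nilpotent is right artinian; right noetherian forces $J(R)$ nilpotent once the ring is semiprimary). So $R$ is right artinian, and it is a right $H$-ring by the above, giving $(2)$.

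The main obstacle I expect is the implication ``right noetherian $+$ right max-ring $\Rightarrow$ right artinian'': one must be careful that ``right max-ring'' gives enough — it yields that $R/J(R)$ is semisimple (semilocality) and one then needs right noetherianness to conclude $J(R)$ is nilpotent and hence $R$ is semiprimary, then right Noetherian $+$ semiprimary $=$ right artinian by Hopkins--Levitzki. Establishing that a right max-ring is semilocal, and that a right noetherian semiprimary ring is right artinian, are the two facts to cite carefully; the rest is a straightforward assembly of Theorem \ref{diamond} with the verification of $(\diamond)$ for right noetherian rings.
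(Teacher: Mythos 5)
Your implication $(2)\Rightarrow(1)$ is fine and matches the paper's (right artinian gives $(\diamond)$ and right max, then Theorem \ref{diamond} or Theorem \ref{prop:allmodulesareHmodules} applies). The genuine gap is in $(1)\Rightarrow(2)$, at the very first step: your claim that every right noetherian ring satisfies $(\diamond)$, based on the assertion that a noetherian module with simple essential socle is artinian, is false in general. That assertion is true over commutative noetherian rings (Matlis: such rings are co-noetherian), but for noncommutative noetherian rings there are well-known counterexamples (Musson) of finitely generated, non-artinian essential extensions of simple modules; indeed, determining which noetherian rings and algebras satisfy $(\diamond)$ is precisely the subject of the references \cite{diamond1}, \cite{diamond}, \cite{diamond2}, \cite{diamond3} cited before Theorem \ref{diamond}, so $(\diamond)$ cannot be taken for granted from noetherianness alone. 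Since your whole derivation of ``$H$-ring and max-ring'' goes through Theorem \ref{diamond}, which needs $(\diamond)$ as a hypothesis, the argument does not get off the ground as written.

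The fix is to reorder the argument, which is what the paper does: first get the max-ring property \emph{without} $(\diamond)$, directly from Proposition \ref{prop:radM}(3) (if every module is co-Kasch, then every nonzero module has a maximal submodule). Then right noetherian plus right max forces $R$ to be right artinian — and here note a second inaccuracy in your sketch: a right max-ring need not be semilocal (infinite products of fields are $V$-rings, hence max-rings, but not semilocal); the correct route is that a right max-ring with no infinite orthogonal set of idempotents (e.g.\ a right noetherian one) is right perfect, then noetherian forces $J(R)$ nilpotent, and Hopkins--Levitzki gives right artinian. Only now does $R$ satisfy $(\diamond)$ (being right artinian), so Theorem \ref{diamond} applies and yields that $R$ is a right $H$-ring, completing $(2)$. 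With that reordering your outline becomes essentially the paper's proof.
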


\begin{proof} $(1) \Rightarrow (2)$ Suppose every right $R$-module is co-Kasch. Then $R$ is right max by Proposition \ref{prop:radM}. Being right noetherian and right max, $R$ is right artinian by \cite{Lamring}. Then $R$ is right $H$-ring by Theorem \ref{diamond}. This proves $(2).$

$(2) \Rightarrow (1)$ By Theorem \ref{prop:allmodulesareHmodules}.
\end{proof}

Local rings are right  $H$-rings, because a local ring has a unique simple right module up to isomorphism. Semilocal right max-rings are perfect by \cite{Lamring}, and a local ring $R$ is right perfect if and only if $R$ is right max-ring. Hence the following is clear by Proposition \ref{prop:radM}(2).

\begin{corollary}  If $R$ is a local ring, then every right $R$-module is a co-Kasch module if and only if $R$ is right perfect.
\end{corollary}

There are local rings that are right perfect which do not satisfy the $(\diamond)$ property (see, \cite[Example 5.2]{diamond}).

\begin{proposition} The following statements are equivalent for a right artinian ring $R$.

\begin{enumerate}
\item[(1)] Every  right $R$-module is a co-Kasch module.
\item[(2)] Every finitely generated right $R$-module is a co-Kasch module.
%\item[(3)] Every cyclic right $R$-module is a co-Kasch module.
\item[(3)] $\Ext_R(S_1,\,S_2)=0$ for each nonisomorphic simple right $R$-modules $S_1,\,S_2.$
\item[(4)] $R$ is a right $H$-ring.
\end{enumerate}
\end{proposition}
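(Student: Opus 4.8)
The plan is to prove the four-way equivalence by establishing a cycle, using the earlier results as much as possible. The implication $(1)\Rightarrow(2)$ is trivial, and $(1)\iff(4)$ is essentially already available: for a right artinian ring the $(\diamond)$ property holds (as noted just before Theorem~\ref{diamond}), so Theorem~\ref{diamond} gives that every right $R$-module is co-Kasch if and only if $R$ is a right $H$-ring and right max-ring; but a right artinian ring is automatically right max (indeed semiartinian), so $(1)\iff(4)$ follows at once. Thus the real content is $(2)\Rightarrow(3)$ and $(3)\Rightarrow(4)$ (or $(3)\Rightarrow(1)$), closing the loop through the finitely generated hypothesis and the $\Ext$ condition.

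For $(3)\Rightarrow(4)$ I would argue that vanishing of $\Ext_R(S_1,S_2)$ for all nonisomorphic simple modules forces $\Hom_R(E(S_1),E(S_2))=0$. Over a right artinian ring $E(S_2)$ is artinian, hence has a finite Loewy series with all composition factors simple; more importantly, every finitely generated module has finite length, and one can filter $E(S_2)$ (or a relevant finitely generated submodule containing the image of a map from $E(S_1)$) by its socle series. A nonzero map $f:E(S_1)\to E(S_2)$ would have image containing $S_2$ (since $S_2$ is essential in $E(S_2)$), so pulling back along the socle filtration and using $\Ext_R(S_1,S_2)=0$ repeatedly — together with $\Hom_R(S_1,S_j)=0$ for $S_j\not\cong S_1$ — one shows inductively that any extension of copies of simples $\ncong S_1$ by $S_1$-free layers cannot map onto something with an $S_2$ on top unless $S_1\cong S_2$. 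The cleanest route is: $\Ext_R^1(S_1,S_2)=0$ together with the structure of the quiver of $R$ shows there is no arrow between the vertices of $S_1$ and $S_2$, hence the block decomposition of $R$ separates $S_1$ and $S_2$, which immediately yields the $H$-ring property (distinct blocks are homologically orthogonal, so injective hulls in different blocks have no maps between them, and within a block containing only one simple the $H$-condition is vacuous). So $(3)$ says precisely that each indecomposable block of $R$ has a unique simple module up to isomorphism, which is one of the standard characterizations of an artinian $H$-ring.

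For $(2)\Rightarrow(3)$ I would proceed by contrapositive: assume $\Ext_R(S_1,S_2)\neq0$ for some nonisomorphic simples $S_1,S_2$, and build a finitely generated module that is not co-Kasch. A nonsplit extension $0\to S_2\to E\to S_1\to0$ gives a finitely generated (in fact length-two) module $E$; its simple subfactors are $S_1$ and $S_2$, but I claim $\Hom_R(E,S_2)=0$: any nonzero map $E\to S_2$ has kernel a maximal submodule, which must be $S_2$ itself (the unique maximal submodule, since $E/S_2\cong S_1\ncong S_2$) — but $E/S_2\cong S_1$, contradiction, unless the map is zero. Hence $E$ has $S_2$ as a simple subfactor that is not a homomorphic image, so $E$ is not co-Kasch, contradicting $(2)$. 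This is clean and requires only the length-two extension.

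The main obstacle I anticipate is the implication $(3)\Rightarrow(4)$: bridging from vanishing of $\Ext^1$ between simples to vanishing of $\Hom$ between full injective hulls. The honest tool is the block/quiver decomposition of the artinian ring: $\Ext_R^1(S_i,S_j)=0=\Ext_R^1(S_j,S_i)$ for all $i\neq j$ means the Ext-quiver of $R$ has no arrows, hence $R$ is a finite product of rings each with a single simple module up to isomorphism (each a connected block with trivial quiver, i.e.\ one vertex, possibly with loops), and for such a product the injective hull of a simple in one factor is annihilated by the other factors, giving $\Hom_R(E(S_i),E(S_j))=0$ for $i\neq j$ directly. I would cite the standard fact that $\Ext^1$ between all pairs of distinct simples vanishing is equivalent to the Ext-quiver being discrete, and then invoke the block decomposition (e.g.\ via \cite{AF} or \cite{FC}); alternatively one can argue directly that $\Hom_R(E(S_i),E(S_j))\neq0$ would, by restricting to a finitely generated submodule of the image and filtering by socle layers, produce a nonsplit extension of $S_i$ by some $S_k$ with $k\neq i$ along the way, contradicting $(3)$.
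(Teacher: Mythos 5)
Your proposal is correct, but it closes the cycle differently from the paper. The paper proves $(1)\Rightarrow(2)$, then $(2)\Rightarrow(4)$ by rerunning the pushout/descending-chain argument of Theorem~\ref{diamond} and observing that, since $A_1$ is cyclic and $R$ is right artinian, all modules appearing there are finitely generated; it then gets $(3)\Leftrightarrow(4)$ by simply citing Papp's theorem, and $(4)\Rightarrow(1)$ from Theorem~\ref{prop:allmodulesareHmodules} since artinian rings are right max. You instead take $(4)\Rightarrow(1)\Rightarrow(2)\Rightarrow(3)\Rightarrow(4)$: your $(1)\Leftrightarrow(4)$ via Theorem~\ref{diamond} and $(\diamond)$ is exactly right, and your $(2)\Rightarrow(3)$ is genuinely nicer and more elementary than the paper's $(2)\Rightarrow(4)$ — a nonsplit length-two extension $0\to S_2\to E\to S_1\to 0$ has $S_2$ as its unique maximal submodule (any other maximal submodule would be a simple complement to $S_2$, splitting the sequence), so $E/S_2\cong S_1$ is its only simple quotient, $\Hom_R(E,S_2)=0$, and $E$ is a finitely generated non-co-Kasch module; this replaces the paper's heavier machinery by a two-line observation. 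What your route buys is independence from Papp's theorem in one direction, but at the price that $(3)\Rightarrow(4)$, which the paper disposes of by citation, is only sketched in your write-up: the socle-filtration musings in your second paragraph are too vague to stand alone, and the honest content is the block argument, which does work but needs the standard chain $\Ext^1_R(S_i,S_j)=0$ for $j\neq i$ $\Rightarrow$ $\Rad(P_i)/\Rad^2(P_i)$ is a sum of copies of $S_i$ $\Rightarrow$ (inducting down the radical filtration) every composition factor of $P_i$ is $S_i$, i.e.\ the Cartan matrix is diagonal $\Rightarrow$ $e_jRe_i=0$ for nonisomorphic classes, so $R$ is a ring direct product with one simple per factor, whence $\Hom_R(E(S_i),E(S_j))=0$. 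If you do not want to write that out, the cleanest fix is the paper's: quote Papp's result that a (right artinian) ring is a right $H$-ring if and only if $\Ext_R(S_1,S_2)=0$ for all nonisomorphic simples, which gives $(3)\Leftrightarrow(4)$ outright; note also that your Cartan-diagonal intermediate step ties in with the paper's separate proposition characterizing when all finitely generated projectives are co-Kasch.
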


\begin{proof} $(1) \Rightarrow (2)$ is clear.  $(2) \Rightarrow (4)$ Note that right artinian rings satisfy the $(\diamond)$ property. With same notation as in the  the proof of Theorem \ref{prop:allmodulesareHmodules} $(2) \Rightarrow (1)$ we have $A_1$ is cyclic. So that by the right artinian assumption every submodule of $A_1$ is finitely generated. Hence similar arguments shows that $R$ is right $H$-ring.

%Suppose the contrary, that $\Ext_R(S_1,\,S_2) \neq 0$. Then there is a module $A$ containing $S_1$ such that $S_1$ is essential in $A$ and $\frac{A}{S_1} \cong S_2$. Since $A$ is cyclic, it is a co-Kasch module by $(3)$. But $S_1$ is not a homomorphic image  of $A$? $S_1$?. Contradiction!  Thus we must have $\Ext_R(S_1,\,S_2)=0$.

$(3) \Leftrightarrow (4)$ By \cite[Theorem 9]{papp} a ring $R$ is a right $H$-ring if and only if $\Ext_R(S_1,\,S_2)=0$ for each nonisomorphic simple right $R$-modules $S_1,\,S_2.$

$(4) \Rightarrow (1)$ Right artinian rings are right max. Thus $(1)$ follows by Proposition \ref{prop:H-ringcharacterization}.
\end{proof}

A nonzero idempotent $e \in R$ is said to be primitive  if $eR$ is indecomposable as a right $R$-module (see, \cite[Proposition 21.8]{Lamring}). Such modules $eR$ are called principal indecomposable modules. Principal indecomposable right modules are local, and so each principal indecomposable module $eR$ has a unique maximal submodule which is $eJ,$ where $J$ denotes the Jacobson radical of $R.$  Every right artinian ring can be written as a direct sum of principal indecomposable right $R$-modules.

For a right artinian ring $R$, let $e_1R,\ldots, e_rR$ represent a complete set of isomorphism classes of principal indecomposable right $R$-modules, and let $V_j =\frac{e_jR}{e_jR}$ so that $V_1,\ldots , V_r$ represent a complete set of isomorphism classes of simple right $R$-modules \cite[Theorem 25.3 (1)]{Lamring}. Let $c_{ij}   \geq 0$ be the number of composition factors of $e_iR$ which are isomorphic to $V_j$. The matrix $$C = (c_{ij}) \in \mathbb{M}_r(\Z)$$
is called the (right) Cartan matrix of $R$. Note that the diagonal elements $c_{ii}$ are $\geq 1$, and the sum of the {\it i}th row of $C$ is just the (composition) length of $e_iR.$

\begin{proposition} Let $R$ be a right artinian ring.  The following statements are equivalent.
\begin{enumerate}

\item[(1)] Every finitely generated projective right $R$-module is a co-Kasch module;
\item[(2)] Every principal indecomposable right $R$-module is a co-Kasch module; 
\item[(3)] The Cartan matrix of $R$ is a diagonal matrix.
\end{enumerate}
\end{proposition}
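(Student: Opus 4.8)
The plan is to dispatch $(1)\Leftrightarrow(2)$ using the direct‑sum decomposition of finitely generated projectives, and then to obtain $(2)\Leftrightarrow(3)$ by pinning down, for each principal indecomposable $e_iR$, exactly which simple modules occur as its subfactors.

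For $(1)\Leftrightarrow(2)$: the implication $(1)\Rightarrow(2)$ is immediate, since each $e_iR$ is finitely generated and projective. For $(2)\Rightarrow(1)$, recall that over a right artinian ring every finitely generated projective right $R$‑module $P$ is a finite direct sum of (copies of) principal indecomposable modules $e_iR$; if each of these is co‑Kasch, then $P$ is co‑Kasch by Proposition \ref{prop:diretsumofco-Kasch}. So this equivalence costs essentially nothing.

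The substance is in the following observation. Fix $i$ and consider $e_iR$. It is local, so $e_iJ$ is its unique maximal submodule, and hence $V_i=e_iR/e_iJ$ is, up to isomorphism, the unique simple homomorphic image of $e_iR$; consequently $e_iR$ is co‑Kasch if and only if every simple subfactor of $e_iR$ is isomorphic to $V_i$. Next I would show that the simple subfactors of $e_iR$, up to isomorphism, are precisely the composition factors of $e_iR$: one inclusion is clear, and for the other, given a simple subfactor $N'/K$ with $K\subseteq N'\subseteq e_iR$, I refine the chain $0\subseteq K\subseteq N'\subseteq e_iR$ to a composition series of $e_iR$ (possible since $e_iR$ has finite length), in which $N'/K$ appears as a factor, so $N'/K$ is a composition factor by Jordan--Hölder. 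By the very definition of the Cartan matrix, the composition factors of $e_iR$ are exactly those $V_j$ with $c_{ij}\ge 1$. Therefore $e_iR$ is co‑Kasch if and only if $c_{ij}=0$ for all $j\neq i$.

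Finally, since $V_1,\dots,V_r$ are pairwise non‑isomorphic and $c_{ii}\ge 1$ always, statement $(2)$ holds precisely when $c_{ij}=0$ for all $i\neq j$, i.e. when $C=(c_{ij})$ is a diagonal matrix, which is $(2)\Leftrightarrow(3)$. I do not anticipate a real obstacle here; the only point demanding a little care is the identification of simple subfactors with composition factors, which relies on $e_iR$ having finite length together with the Jordan--Hölder theorem.
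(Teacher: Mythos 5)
Your proposal is correct and follows essentially the same route as the paper: $(2)\Rightarrow(1)$ via the decomposition of finitely generated projectives into principal indecomposables together with the closure of co-Kasch modules under direct sums, and $(2)\Leftrightarrow(3)$ via the locality of $e_iR$ (so $V_i$ is its unique simple quotient) and the composition factors of $e_iR$. The only difference is that you spell out, using Jordan--H\"older and refinement of chains, the identification of simple subfactors of $e_iR$ with its composition factors, a step the paper leaves as ``easy to see.''
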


\begin{proof} $(1) \Rightarrow (2)$ is clear. 

$(2) \Leftrightarrow (3)$ Let $e_1R,\cdots , e_s R$  be the principal indecomposable right $R$-modules. For $i\in \{1,\cdots ,n \}$ 
let $$0=A_0\subseteq A_1 \subseteq \cdots \subseteq A_{n_i}=e_iR$$be a composition series of $e_iR.$ Set $V_t =\frac{A_{t}}{A_{t-1}}$ for $t=1,\cdots n_i.$ Since $e_iR$ is a local right $R$-module, $\frac{e_iR}{e_iJ}$ is the unique simple factor of $e_iR.$  Now it is easy to see that  $V_{t} \cong \frac{e_iR}{e_iJ}$ for each $t=1,\cdots n_i$ if and only if $c_{ii}=cl(e_iR),$ and $c_{ij}=0$ for each $j\neq i.$ Hence the proof follows.

$(2) \Rightarrow (1)$ Over a right artinian ring every finitely generated projective right $R$-module $P$ is a direct sum of principal indecomposable right $R$-modules (see, \cite[Theorem 25.3(2)]{Lamring}). Now, (1) follows by $(2)$ and Proposition \ref{prop:diretsumofco-Kasch}.
\end{proof}

%Thus we have the following corollary:

%\begin{corollary}If $R$ is right max and right $H$-ring, then $E(S)$ is an $H$-module for every simple right $R$-module $S$.
%\end{corollary}

%The assumption that $R$ is right max, is not superflous. For example, $\Z$ is an $H$-ring, but $\Z_{p ^{\infty}}$ is not an $H$-module, because it has no maximal %submodules.

%\begin{corollary}  A commutative semilocal ring $R$ is perfect if and only if  and every $R$-module is a co-Kasch module.
%\end{corollary}

%\begin{proof}  Commutative perfect rings are $H$-rings. Hence the proof follows by Proposition \ref{prop:allmodulesareHmodules}. 
%\end{proof}

%Every right Noetherian right max-ring is right artinian, hence the following corollary holds.

%\begin{corollary} Let $R$ be a right Noetherian ring. Every  right  $R$-module is a co-Kasch module if and only if $R$ is a right artinian, right $H$-ring.
%\end{corollary}

%\begin{proof} Let us prove the necessity. 

%\end{proof}

At this point, it is natural to ask what are the rings whose cyclic  or finitely generated right modules are co-Kasch modules.  It is easy to see that local rings have this property by Proposition \ref{prop:radM}(2).

\begin{proposition} If $R$ is commutative, then every cyclic $R$-module is a co-Kasch module.
\end{proposition}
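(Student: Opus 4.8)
The plan is to reduce the statement to the fact that over a commutative ring $R$, a cyclic module $R/I$ has only finitely much ``new'' structure at each maximal ideal, and every simple subfactor of $R/I$ is already a factor of $R/I$ itself. More precisely, let $M = R/I$ for an ideal $I$, and let $S$ be a simple subfactor of $M$; by Proposition \ref{prop:examplesHmodules}-style reasoning it suffices to produce a nonzero homomorphism $M \to S$. Since $S$ is a simple $R$-module, $S \cong R/\mathfrak{m}$ for some maximal ideal $\mathfrak{m} \supseteq \operatorname{Ann}(S)$. The key observation is that because $S$ is a subfactor of $R/I$, we have $S \cong N/K$ with $K \subseteq N \subseteq R/I$; then $\operatorname{Ann}(N/K) \supseteq \operatorname{Ann}(R/I) = I$, so $I \subseteq \mathfrak{m}$. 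Here I use crucially that $R$ is commutative: for a cyclic module $R/I$ and any subfactor $T$ of it, $\operatorname{Ann}(R/I) = I$ annihilates $T$, hence annihilates $S$, hence $I \subseteq \operatorname{Ann}(S) = \mathfrak{m}$.

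Once $I \subseteq \mathfrak{m}$, the composite $R \twoheadrightarrow R/I \twoheadrightarrow R/\mathfrak{m} \cong S$ shows that $S$ is a homomorphic image of $R/I = M$, which is exactly what is needed: $\Hom_R(M, S) \neq 0$ for every simple subfactor $S$ of $M$, so $M$ is co-Kasch by the equivalence $(1) \Leftrightarrow (2)$ of Proposition \ref{co-KaschsigmaM} (or directly by the definition together with Proposition $2.2$).

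First I would set up the notation ($M = R/I$, $S$ a simple subfactor, $S \cong N/K$), then record that $I = \operatorname{Ann}_R(R/I)$ kills $N/K$ and therefore $I \subseteq \operatorname{Ann}_R(S)$. Next I would pick a maximal ideal $\mathfrak{m}$ with $S \cong R/\mathfrak{m}$ — this exists because $S$ is simple and $R$ is commutative, so $\operatorname{Ann}_R(S)$ is a maximal ideal and $S \cong R/\operatorname{Ann}_R(S)$. Then I would note $I \subseteq \operatorname{Ann}_R(S) = \mathfrak{m}$ and conclude by exhibiting the canonical surjection $R/I \to R/\mathfrak{m} \cong S$.

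The argument is short and I do not anticipate a genuine obstacle; the only point requiring care is the use of commutativity. In the noncommutative setting $\operatorname{Ann}_R(S)$ need not be two-sided-maximal and a simple module is not of the form $R/\mathfrak{m}$ for a maximal \emph{ideal}, so the reduction ``$I$ kills $S$, hence $I$ lies in the defining maximal ideal of $S$, hence $R/I$ surjects onto $S$'' breaks down. Thus the main thing to verify carefully is that, for commutative $R$, every simple subfactor of $R/I$ really does arise as $R/\mathfrak{m}$ with $I \subseteq \mathfrak{m}$, which follows from the elementary annihilator computation above.
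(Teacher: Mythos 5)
Your proposal is correct and follows essentially the same route as the paper: both identify the annihilator of the simple subfactor as a maximal ideal $\mathfrak{m}$ (resp.\ $P$) containing $I$, and conclude via the canonical surjection $R/I \to R/\mathfrak{m} \cong S$. The annihilator computation you flag as the crucial use of commutativity is exactly the step the paper relies on as well.
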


\begin{proof} Let $C\cong \frac{R}{I}$ be a cyclic $R$-module and $X$ be a simple subfactor of $\frac{R}{I}$. Then $X \cong \frac{A}{J} \subseteq \frac{R}{J}$ for some ideal $J$ with $I\subseteq J$. Let $P=ann_R(\frac{A}{J})$. Note that $\Hom_R( \frac{R}{I},\, \frac{R}{P})\neq 0.$ Then, as $X$ is a  simple $R$-module, $P$ is a maximal ideal of $R$ with $J \subseteq P.$ Then $\frac{A}{J} \cong \frac{R}{P}$ and so $\frac{A}{J}$ is isomorphic to a simple factor of $\frac{R}{I}$. Hence $\frac{R}{I}$ is a  co-Kasch module. 
\end{proof}

\begin{proposition} If $R$ is a right $H$-ring, then every finitely generated right $R$-module is a co-Kasch module.
\end{proposition}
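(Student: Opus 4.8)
The plan is to verify the definition directly: given a simple subfactor $S$ of a finitely generated module $M$, I will produce an epimorphism from $M$ onto $S$. The argument is a ``local'' version of the implication $(1)\Rightarrow(2)$ in Theorem~\ref{prop:allmodulesareHmodules}, in which the finite generation of $M$ plays the role of the right max-ring hypothesis used there; so a reader familiar with that proof should recognize the shape of this one immediately.

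First I would write $S=N/K$ with $K\subseteq N\subseteq M$ and embed $N/K$ into its injective hull $E(S)$. Since $N/K\subseteq M/K$ and $E(S)$ is injective, this embedding extends to a homomorphism $g\colon M/K\to E(S)$, which is nonzero because $N/K\neq 0$; composing with the canonical epimorphism $\pi\colon M\to M/K$ yields a nonzero map $\varphi:=g\pi\colon M\to E(S)$. Put $I=\Img\varphi\subseteq E(S)$. As a homomorphic image of the finitely generated module $M$, the module $I$ is finitely generated and nonzero, hence it has a maximal submodule $L$; let $q\colon I\to X:=I/L$ be the resulting simple quotient. This is the only place where finite generation of $M$ enters.

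The key step is to show $X\cong S$, and this is where I expect the only genuine subtlety to lie. Compose $q$ with the inclusion $X\hookrightarrow E(X)$ to obtain a nonzero homomorphism $I\to E(X)$, and use injectivity of $E(X)$ together with the inclusion $I\subseteq E(S)$ to extend it to a map $h\colon E(S)\to E(X)$. Then $h\neq 0$, since its restriction to $I$ equals the nonzero composite $I\to X\hookrightarrow E(X)$, so $\Hom_R(E(S),E(X))\neq 0$; as $S$ and $X$ are simple, the right $H$-ring hypothesis forces $S\cong X$. Therefore $S$ is a homomorphic image of $I$, hence of $M$ (via $\varphi$ followed by $q$ and the isomorphism $X\cong S$), so $\Hom_R(M,S)\neq 0$. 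Since $S$ was an arbitrary simple subfactor of $M$, this shows $M$ is co-Kasch.
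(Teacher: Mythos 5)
Your proof is correct and follows essentially the same route as the paper's: extend the inclusion of the simple subfactor into $E(S)$ to a map on $M$, use finite generation to get a simple quotient of the image, and invoke the $H$-ring hypothesis to identify that quotient with $S$. The only difference is that you spell out the last step (a simple subfactor of $E(S)$ must be isomorphic to $S$) via an explicit extension $E(S)\to E(X)$, which the paper asserts directly and proves separately in its characterization of right $H$-rings.
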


\begin{proof} 
Suppose that $R$ is a right $H$-ring, and $M$ a finitely generated right $R$-module.  Let $S=\frac{A}{B}\subseteq \frac{M}{B}$ be a simple subfactor of $M$.  There is a nonzero homomorphism $f:\frac{M}{B} \to E(S)$.  Then $\frac{M}{\Ker (f)}$ is isomorphic to a nonzero submodule of $E(S)$.   Since $\frac{M}{\Ker (f)}$ is finitely generated and nonzero,  there is a maximal submodule $K$ of $M$ such that $\frac{M}{K}\cong U$, where  $U$ is a simple subfactor of $E(S)$. By the hypothesis, $R$ is a right $H$-ring, thus $U \cong S.$  Therefore every simple subfactor of $M$ is isomorphic to  a simple factor of $M$. Hence $M$ is a co-Kasch module.
\end{proof}

%\begin{corollary} Let $R$ be a $QF$ ring. The following statements are equivalent.
%\end{corollary}
%\begin{proof} 
%\end{proof}

A ring $R$ is said to be right dual Kasch if every simple right $R$-module is a homomorphic image of an injective right $R$-module. Dual Kasch rings are investigated and studied in \cite{dualkasch}. 

\begin{proposition}\label{prop:E(R) H-module} $E(R)$ is a right co-Kasch module if and only if $R$ is right dual Kasch.
\end{proposition}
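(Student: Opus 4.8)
The plan is to first replace the co-Kasch condition on $E(R)$ by a cleaner statement about which simple modules are homomorphic images of $E(R)$, and then read off both implications.

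First I would record that the simple subfactors of $E(R)$ are, up to isomorphism, exactly all simple right $R$-modules. A simple subfactor of $E(R)$ is in particular a simple right $R$-module; conversely, if $S$ is simple then $S\cong R/M$ for a maximal right ideal $M$, and the inclusion $R\subseteq E(R)$ induces an embedding $R/M\hookrightarrow E(R)/M$, exhibiting $S$ as a simple subfactor of $E(R)$. Hence $E(R)$ is co-Kasch if and only if every simple right $R$-module is a homomorphic image of $E(R)$. Granting this, the forward implication is immediate: $E(R)$ is injective, so if every simple right $R$-module is a homomorphic image of $E(R)$, then by definition $R$ is right dual Kasch.

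For the converse, let $S$ be a simple right $R$-module and, using the dual Kasch hypothesis, fix an injective right $R$-module $Q$ together with an epimorphism $g\colon Q\to S$. The key step is to produce a homomorphism $E(R)\to Q$ whose composite with $g$ is nonzero, and here I would exploit injectivity of $Q$ rather than try to extend a map $R\to S$ directly (which need not be possible, as $S$ is typically not injective). Pick $q\in Q$ with $g(q)\neq 0$ and consider the homomorphism $\phi\colon R\to Q$ given by $\phi(r)=qr$; since $Q$ is injective and $R$ is (essential) in $E(R)$, $\phi$ extends to some $f\colon E(R)\to Q$. Then $gf\colon E(R)\to S$ satisfies $(gf)(1)=g(q)\neq 0$, so $gf\neq 0$, hence $gf$ is onto because $S$ is simple. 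Thus every simple right $R$-module is a homomorphic image of $E(R)$, and by the first paragraph $E(R)$ is co-Kasch.

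I do not anticipate a serious obstacle: the proof is short once the problem is reformulated. The two points needing care are the reduction step (verifying that the simple subfactors of $E(R)$ are precisely all simple right $R$-modules, via $R/M\hookrightarrow E(R)/M$) and the observation that one should extend the concrete multiplication map $r\mapsto qr$ through injectivity of $Q$; everything else is routine.
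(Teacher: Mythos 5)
Your argument is correct, and it differs from the paper only in how the sufficiency is handled. The necessity direction coincides with the paper's: both you and the authors observe that, via $R\subseteq E(R)$, every simple module $R/M$ embeds in $E(R)/M$ and is therefore a simple subfactor of $E(R)$, so the co-Kasch property together with the injectivity of $E(R)$ yields that $R$ is right dual Kasch. For the converse the paper gives no argument, simply invoking \cite[Theorem 2.1(2)]{dualkasch}, which records that over a right dual Kasch ring every simple right module is a homomorphic image of $E(R)$. You reprove this implication from scratch: given an epimorphism $g\colon Q\to S$ with $Q$ injective, you extend the map $r\mapsto qr$ (where $g(q)\neq 0$) along the inclusion $R\subseteq E(R)$ using the injectivity of $Q$, and compose with $g$ to get a nonzero, hence surjective, map $E(R)\to S$; together with your preliminary reduction (all simples are, up to isomorphism, exactly the simple subfactors of $E(R)$, so co-Kasch for $E(R)$ means all simples are images of $E(R)$), this closes the argument. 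Your route buys a self-contained proof at the cost of a few lines, and it is essentially the standard argument underlying the cited result; note that essentiality of $R$ in $E(R)$ is not needed for the extension step, only that $R\hookrightarrow E(R)$ is a monomorphism and $Q$ is injective.
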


\begin{proof} Suppose $E(R)$ is a right co-Kasch module.  Since $R \subseteq E(R)$, every simple right $R$-module is a subfactor of $E(R).$  Thus every simple right $R$-module is a homomorphic image of $E(R)$, and so  $R$ is right dual Kasch. This proves the necessity. 
Sufficiency follows by \cite[Theorem 2.1(2)]{dualkasch}.
\end{proof}

\begin{corollary} If $R$ is a right max and right $H$-ring, then $R$ is right dual Kasch.
\end{corollary}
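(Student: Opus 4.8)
The plan is to obtain this corollary as an immediate formal consequence of the two results established just before it. The guiding observation is that $E(R)$ is, in particular, a right $R$-module, so any statement of the form ``every right $R$-module is co-Kasch'' automatically applies to it; combining this with the characterization of right dual Kasch rings via $E(R)$ closes the argument.

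First I would invoke Theorem \ref{prop:allmodulesareHmodules}. Since $R$ is assumed to be both a right $H$-ring and a right max-ring, the implication $(1) \Rightarrow (2)$ of that theorem yields that every right $R$-module is a co-Kasch module. Specializing to $M = E(R)$, the injective hull of $R_R$ is a right co-Kasch module. The only point worth checking is that $E(R)$ really belongs to the scope of Theorem \ref{prop:allmodulesareHmodules}, but that theorem is stated for \emph{all} right $R$-modules with no finiteness or cardinality hypothesis, so this is automatic.

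Next I would apply Proposition \ref{prop:E(R) H-module}, which asserts that $E(R)$ is a right co-Kasch module if and only if $R$ is right dual Kasch. Feeding in the conclusion of the previous step, $R$ is right dual Kasch, which is exactly the assertion of the corollary.

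I do not anticipate any genuine obstacle here: the statement is purely a matter of chaining Theorem \ref{prop:allmodulesareHmodules} with Proposition \ref{prop:E(R) H-module}, and the proof fits in two sentences. If one wished to be slightly more self-contained, one could instead argue directly that every simple right $R$-module, being a subfactor of $R \subseteq E(R)$, is a homomorphic image of $E(R)$ by co-Kaschness of $E(R)$ and is therefore a homomorphic image of an injective module; but this merely re-derives the necessity half of Proposition \ref{prop:E(R) H-module}, so the two-step route is preferable.
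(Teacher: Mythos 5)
Your proposal is correct and is exactly the intended argument: the corollary follows by applying Theorem \ref{prop:allmodulesareHmodules} $(1)\Rightarrow(2)$ to conclude $E(R)$ is co-Kasch and then invoking Proposition \ref{prop:E(R) H-module}, which is precisely how the paper positions it (the corollary is stated without proof immediately after these two results).
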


A commutative ring is said to be classical  if every element is either a zero-divisor or a unit.  
  
\begin{corollary} Let $R$ be a commutative Noetherian ring. The following statements are equivalent.
\begin{enumerate}
\item[(1)] $E(R)$ is a co-Kasch module.
\item[(2)] $R$ is a classical ring.
\item[(3)] $R$ is a dual Kasch ring.
\item[(4)] $R$ is a Kasch ring.
\end{enumerate}
\end{corollary}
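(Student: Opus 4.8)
The plan is to show that each of (1)--(4) is equivalent to the single assertion $(\star)$: \emph{every maximal ideal of $R$ is an associated prime of $R$}; equivalently, for each $\mathfrak{m}\in\Omega$ there is $0\neq x\in R$ with $\operatorname{ann}_R(x)=\mathfrak{m}$. The equivalence $(1)\iff(3)$ is precisely Proposition \ref{prop:E(R) H-module}, so the work lies in proving $(2)\iff(\star)$, $(3)\iff(\star)$ and $(4)\iff(\star)$.

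First, $(4)\iff(\star)$ is a restatement of definitions: an embedding $R/\mathfrak{m}\hookrightarrow R$ is exactly the choice of an element $x\in R$ with $\operatorname{ann}_R(x)=\mathfrak{m}$, and $R$ is Kasch iff such an embedding exists for every $\mathfrak{m}\in\Omega$. For $(2)\iff(\star)$ I would use that, $R$ being Noetherian, the set of zero-divisors of $R$ is the finite union $\bigcup_{\mathfrak{p}\in\operatorname{Ass}(R)}\mathfrak{p}$, whereas the set of non-units is $\bigcup_{\mathfrak{m}\in\Omega}\mathfrak{m}$. If $(\star)$ holds, every non-unit lies in a maximal ideal, which is an associated prime, hence is a zero-divisor; so $R$ is classical. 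Conversely, if $R$ is classical then each $\mathfrak{m}\in\Omega$ satisfies $\mathfrak{m}\subseteq\bigcup_{\mathfrak{p}\in\operatorname{Ass}(R)}\mathfrak{p}$, so by prime avoidance $\mathfrak{m}\subseteq\mathfrak{p}$ for some $\mathfrak{p}\in\operatorname{Ass}(R)$, and maximality forces $\mathfrak{m}=\mathfrak{p}$; this is $(\star)$.

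The substantive point is $(3)\iff(\star)$. By Matlis' structure theorem for injectives over a Noetherian ring, every injective $R$-module is a direct sum of indecomposable injectives $E(R/\mathfrak{p})$ indexed by primes $\mathfrak{p}$; and since a simple module that is a homomorphic image of such a direct sum is already a homomorphic image of one summand, $R$ is dual Kasch iff for every $\mathfrak{m}\in\Omega$ the module $R/\mathfrak{m}$ is a homomorphic image of some $E(R/\mathfrak{p})$. I would first show that necessarily $\mathfrak{p}=\mathfrak{m}$: if $\mathfrak{p}\neq\mathfrak{m}$ then, $\mathfrak{m}$ being maximal, there is $x\in\mathfrak{m}\setminus\mathfrak{p}$; since $E(R/\mathfrak{p})$ is naturally an $R_{\mathfrak{p}}$-module, $x$ acts invertibly on it while it annihilates $R/\mathfrak{m}$, whence $\Hom_R(E(R/\mathfrak{p}),R/\mathfrak{m})=0$. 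Thus $R$ is dual Kasch iff $R/\mathfrak{m}$ is a homomorphic image of $E(R/\mathfrak{m})$ for every $\mathfrak{m}\in\Omega$. Finally, passing to $R_{\mathfrak{m}}$ (which leaves $E(R/\mathfrak{m})$ and its $R$-submodule lattice unchanged, identifying it with $E_{R_{\mathfrak{m}}}(\kappa)$ for $\kappa=R/\mathfrak{m}$), Matlis duality over the complete local ring $\widehat{R_{\mathfrak{m}}}$ turns ``$E(R/\mathfrak{m})$ has a simple quotient'' into ``$\widehat{R_{\mathfrak{m}}}$ has nonzero socle'', i.e. $\operatorname{depth}R_{\mathfrak{m}}=0$, i.e. $\mathfrak{m}R_{\mathfrak{m}}\in\operatorname{Ass}(R_{\mathfrak{m}})$, i.e. $\mathfrak{m}\in\operatorname{Ass}(R)$. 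This is $(\star)$ and closes the equivalences.

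I expect the last paragraph to be the main obstacle: one must carefully justify that $E(R/\mathfrak{m})$ computed over $R$ agrees (with its submodule lattice) with $E_{R_{\mathfrak{m}}}(\kappa)$ and with the Matlis module of $\widehat{R_{\mathfrak{m}}}$, so that Matlis duality over a complete local ring may be invoked to translate the existence of a simple quotient of the injective hull into the depth/associated-prime condition. Everything else is routine once $(\star)$ has been isolated as the common reformulation.
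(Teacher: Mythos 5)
Your argument is correct, but it takes a genuinely different route from the paper: the paper's proof is essentially two citations, namely $(1)\Leftrightarrow(3)$ by Proposition \ref{prop:E(R) H-module} (which you also use) and $(2)\Leftrightarrow(3)\Leftrightarrow(4)$ by an appeal to Theorem 3.4 of the dual Kasch paper \cite{dualkasch}, whereas you reprove that cited equivalence from scratch by funnelling everything through the single condition $(\star)$ that every maximal ideal of $R$ is an associated prime. Your reductions are sound: $(4)\Leftrightarrow(\star)$ is definitional, $(2)\Leftrightarrow(\star)$ is the standard ``zero-divisors $=\bigcup\operatorname{Ass}(R)$'' plus prime avoidance over the finite set $\operatorname{Ass}(R)$, and for $(3)\Leftrightarrow(\star)$ the Matlis decomposition of injectives, the vanishing $\Hom_R(E(R/\mathfrak{p}),R/\mathfrak{m})=0$ for $\mathfrak{p}\neq\mathfrak{m}$ (since some $x\in\mathfrak{m}\setminus\mathfrak{p}$ acts invertibly on $E(R/\mathfrak{p})$), and the duality step are all standard; the identification you flag as delicate is harmless because $E(R/\mathfrak{m})$ is $\mathfrak{m}$-power torsion, so $R$-, $R_{\mathfrak{m}}$- and $\widehat{R_{\mathfrak{m}}}$-linear maps into $R/\mathfrak{m}$ coincide, and Matlis duality converts a simple quotient of $E(R/\mathfrak{m})$ into a nonzero socle of $\widehat{R_{\mathfrak{m}}}$, i.e. $\mathfrak{m}\in\operatorname{Ass}(R)$. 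What your approach buys is a self-contained proof that also isolates the concrete characterization $(\star)$ (equivalently, $\operatorname{depth}R_{\mathfrak{m}}=0$ for all maximal $\mathfrak{m}$), at the price of invoking Matlis theory; what the paper's approach buys is brevity, outsourcing exactly this content to \cite{dualkasch}.
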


\begin{proof} $(1) \Leftrightarrow (3)$ By Proposition \ref{prop:E(R) H-module}. $(2) \Leftrightarrow (3) \Leftrightarrow (4)$ By \cite[Theorem 3.4.]{dualkasch}.
\end{proof}

\begin{proposition} If $E(R)$ is projective, then $E(R)$ is a right co-Kasch module if and only if $R$ is right self-injective.
\end{proposition}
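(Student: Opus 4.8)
The plan is to leverage the generator characterization of co-Kasch modules established above: if $P$ is $M$-projective, then $P$ is co-Kasch if and only if $P$ generates $\sigma[M]$. The reverse implication of the statement is immediate: if $R$ is right self-injective then $E(R)=R$ is free, hence co-Kasch by Proposition \ref{prop:examplesHmodules}(2); so all the work lies in the forward direction.

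Assume then that $E(R)$ is projective and co-Kasch. I would first record two easy observations. One: $E(R)$ is $E(R)$-projective, since a projective module is $M$-projective for every right $R$-module $M$. Two: $\sigma[E(R)]=\Mod$, because the inclusion $R\hookrightarrow E(R)$ places $R$ in $\sigma[E(R)]$, and $\sigma[E(R)]$, being closed under direct sums and factor modules, then contains every right $R$-module. With these two facts, the generator characterization applies and yields that $E(R)$ is a generator for $\Mod$.

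The conclusion is then short. Since $E(R)$ generates $\Mod$, the module $R$ is an epimorphic image of $E(R)^{(I)}$ for some index set $I$; as $R$ is cyclic, this epimorphism can be taken to have domain $E(R)^{n}$ for some $n\in\N$. Projectivity of $R_R$ splits this epimorphism, so $R$ is isomorphic to a direct summand of $E(R)^{n}$. Finally, $E(R)^{n}$ is injective, being a finite direct sum of injective modules, and a direct summand of an injective module is injective; hence $R_R$ is injective, that is, $R$ is right self-injective.

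I do not expect a real obstacle here; the only steps needing a word of justification are the identification $\sigma[E(R)]=\Mod$ and the passage from ``$R$ is a quotient of a direct sum of copies of $E(R)$'' to ``$R$ is a direct summand of $E(R)^{n}$'', both routine. If one prefers to bypass the generator language, Proposition \ref{prop:E(R) H-module} identifies co-Kaschness of $E(R)$ with $R$ being right dual Kasch, and one may instead argue directly with the trace ideal $T=\mathrm{tr}(E(R))$: the identity $E(R)\,T=E(R)$ (valid for any projective module) together with the fact that every simple right $R$-module is an image of $E(R)$ forces $T=R$, and then the same splitting argument finishes the proof.
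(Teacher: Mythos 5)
Your proposal is correct and follows essentially the same route as the paper: co-Kaschness plus projectivity makes $E(R)$ a generator for $\operatorname{Mod-}R$, so $R$ is a direct summand of some $E(R)^n$ and hence injective, while the converse is immediate since $E(R)=R$ is then free. The only cosmetic difference is that you invoke the $\sigma[M]$-generator proposition via $\sigma[E(R)]=\operatorname{Mod-}R$, whereas the paper argues directly from ``$E(R)$ maps onto every simple module''; the substance is identical.
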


\begin{proof} Suppose that $E(R)$ is a right co-Kasch module. Then every simple right $R$-module is a homomorphic image of $E(R)$. Then projectivity of $E(R)$ implies that, $E(R)$ is a  generator for Mod-$R$. Thus $E(R)^n \cong K \oplus R$ for some $n \in \Z^+$. Therefore $R$ is right self-injective.  Sufficiency is clear.
\end{proof}

Recall that, a right $R$-module $M$ is said to be retractable if $\Hom_R (M,\,N)\neq 0$ for each nonzero submodule $N$ of $M$. A right $R$-module $M$ is said to be max-projective if for every maximal right ideal $I$ of $R$, any homomorphism $f: M \to R/I$ can be lifted to a homomorphism $g:M \to R$ (see, \cite{maxprojective}).

\begin{lemma} Let $M$ be a right max-projective module. If $M$ is a co-Kasch module, then  $M$ is retractable.
\end{lemma}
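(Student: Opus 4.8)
The plan is to show that for every nonzero submodule $N$ of $M$, there is a nonzero homomorphism $M \to N$. Pick $0 \neq x \in N$. The first step is to produce a simple subfactor of $M$ sitting inside $N$: the cyclic module $xR$ has a maximal submodule $K$, so $S = xR/K$ is a simple subfactor of $M$ (and in fact a simple \emph{factor} of a submodule of $N$). Since $M$ is co-Kasch, $\Hom_R(M,S) \neq 0$, so there is an epimorphism $\varphi : M \to S$. The goal is to lift this to a map landing in $N$.

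Next I would use max-projectivity. The composite $xR \twoheadrightarrow S$ exhibits $S$ as a factor $R/I$ with $I$ a maximal right ideal, and more to the point we have an epimorphism $\psi : xR \to S$. Since $M$ is max-projective, the homomorphism $\varphi : M \to S \cong R/I$ lifts to some $g : M \to R$; but that does not immediately land in $N$. The better route is to lift $\varphi$ through the epimorphism $xR \to S$ rather than through $R \to R/I$. So I want: given $\varphi : M \to S$ and the epimorphism $\psi : xR \to S$, find $h : M \to xR \subseteq N$ with $\psi h = \varphi$. This is exactly a projectivity-type lifting, but along $xR \to S$ rather than $R \to R/I$. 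The key observation is that $xR$ is cyclic, so $xR \cong R/J$ for some right ideal $J$, and the map $R \to xR \to S = R/I$ shows $J \subseteq I$ with $I/J$ the kernel; then the max-projective lift $g : M \to R$ of $\varphi$ composed with $R \to R/J \cong xR$ gives $h : M \to xR$, and one checks $\psi h = \varphi$ by chasing the quotient maps $R \to R/J \to R/I$. Since $\varphi \neq 0$, also $h \neq 0$, and $h$ maps $M$ into $xR \subseteq N$. Hence $\Hom_R(M,N) \neq 0$.

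The main obstacle I anticipate is the bookkeeping in the lifting step: max-projectivity only guarantees lifts against $R \twoheadrightarrow R/I$ for \emph{maximal} right ideals $I$, so one must be careful that the lift $g : M \to R$ of $\varphi$ really does factor through $R \to xR$, i.e. that the chosen generator and the identification $xR \cong R/J$ are compatible with the surjection onto $S$. Concretely, fixing the isomorphism $xR \cong R/J$ via $xr \mapsto r + J$, the map $\psi$ becomes the canonical projection $R/J \to R/I$, and the max-projective lift of $\varphi$ is a map $g : M \to R$ with $(R \to R/I)\circ g = \varphi$; composing with $R \to R/J$ and transporting back along $R/J \cong xR$ yields $h : M \to xR$ with $\psi h = \varphi$, as $R/J \to R/I$ equals $\psi$ under the identification. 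Once this compatibility is pinned down the rest is immediate, and the conclusion $\Hom_R(M,N) \neq 0$ follows, so $M$ is retractable.
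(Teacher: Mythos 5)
Your proof is correct and takes essentially the same route as the paper: pick $0\neq x\in N$, use the co-Kasch hypothesis to obtain a nonzero map $M\to xR/K$ for a maximal submodule $K$ of $xR$, and use max-projectivity to produce a nonzero map $M\to xR\subseteq N$. The only difference is that you spell out the factorization through $R\twoheadrightarrow R/J\cong xR$ (lifting to $R$ against the maximal right ideal $I$ and descending to $xR$), a step the paper's proof asserts directly from max-projectivity; this is a welcome clarification, not a change of method.
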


\begin{proof} Let $N$ be a nonzero submodule of $M$, and $0 \neq x \in N$. Let $K$ be a maximal submodule of $xR$. Since $M$ is a co-Kasch module and $\frac{xR}{K}$ is a simple subfactor of $M,$ $\Hom_R(M, \frac{xR}{K} )\neq 0$. Let $f: M \to \frac{xR}{K}$ be a nonzero homomorphism. By the max-projectivity assumption on $M$, there is a  homomorphism $g: M \to xR$ such that $\pi g=f$, where $\pi :xR \to \frac{xR}{K} $ is the canonical epimorphism. Then $gi: M \to N$ is a nonzero homomorphism, where $i:xR \to N$ is the inclusion map. Therefore, $M$ is a retractable right $R$-module, as desired.
\end{proof}

\begin{proposition} Let $M$ be a  right semiartinian co-Kasch module. Then $\Hom_R (M,\,\frac{N}{K})\neq 0$ for each submodules $K \subsetneq N \subsetneq M.$  In particular, $M$ is retractable.
\end{proposition}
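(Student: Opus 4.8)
The plan is to reduce the claim to a single application of the co-Kasch property, after extracting a simple module from inside $N/K$. First I recall that a module $X$ is right semiartinian exactly when every nonzero homomorphic image of $X$ has nonzero socle, and that the class of semiartinian modules is closed under submodules and under factor modules. Consequently, for submodules $K \subsetneq N \subsetneq M$, the factor module $M/K$ is again semiartinian; since $N/K$ is a nonzero submodule of $M/K$, it follows that $\Soc(N/K) \neq 0$. Fix a simple submodule $S$ of $N/K$.

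Next, observe that $S \subseteq N/K \subseteq M/K$, so $S$ is a simple subfactor of $M$. By the definition of a co-Kasch module (equivalently, by the characterization that $\Hom_R(M,S)\neq 0$ for every simple subfactor $S$ of $M$), $S$ is a homomorphic image of $M$; in particular there is a nonzero homomorphism $f \colon M \to S$. Composing $f$ with the inclusion map $S \hookrightarrow N/K$ gives a nonzero homomorphism $M \to N/K$, hence $\Hom_R(M, N/K) \neq 0$, which is the assertion.

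Finally, for retractability let $N$ be an arbitrary nonzero submodule of $M$. If $N = M$ the identity map shows $\Hom_R(M, N) \neq 0$; if $N \subsetneq M$, applying the first part with $K = 0$ yields $\Hom_R(M, N) \neq 0$. Thus $M$ is retractable.

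I expect no serious obstacle in this argument: the only facts used beyond the definition of co-Kasch are the elementary closure properties of semiartinian modules under submodules and quotients, together with the statement that a nonzero semiartinian module has nonzero socle — all of which are standard. The whole content is that semiartiniannity guarantees a simple submodule in the relevant subfactor, at which point the co-Kasch hypothesis applies verbatim.
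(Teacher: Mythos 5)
Your argument is correct: since semiartinian modules are closed under factor modules and submodules, $N/K$ is a nonzero semiartinian module and hence contains a simple submodule $S$, which is a simple subfactor of $M$, so the co-Kasch hypothesis yields a nonzero map $M \to S \hookrightarrow N/K$, and retractability follows by taking $K=0$ (the case $N=M$ being trivial). The paper states this proposition without giving a proof, and your argument supplies exactly the routine verification the authors evidently had in mind, with no gaps.
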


Now we shall mention several results about Kasch modules.
We observe the following characterization of right $H$-rings in terms of Kasch modules.

\begin{proposition}\label{prop:H-ringcharacterization} The following statements are equivalent for a ring $R$: 
\begin{enumerate} 
\item[(1)] $R$ is a right $H$-ring; 
\item[(2)] every simple subfactor of $E(S)$ is isomorphic to $S$ for each simple right $R$-module $S;$
\item[(3)] for every simple right $R$-module $S$, each submodule of $E(S)$ is a Kasch module;
\item[(4)]  $E(S)$ is a Kasch module for each simple right $R$-module $S.$

\end{enumerate}
\end{proposition}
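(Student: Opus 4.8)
The plan is to establish the cycle $(1) \Rightarrow (2) \Rightarrow (3) \Rightarrow (4) \Rightarrow (1)$, the recurring tools being the injectivity of injective hulls and the fact that a simple module is essential in its injective hull. Throughout, I will freely use that a subfactor of a submodule $N \subseteq M$ is again a subfactor of $M$.

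For $(1) \Rightarrow (2)$, let $S$ be a simple right $R$-module and let $T$ be a simple subfactor of $E(S)$, say $T \cong A/B$ with $B \subseteq A \subseteq E(S)$. I would compose the quotient map $A \to A/B \cong T$ with the inclusion $T \hookrightarrow E(T)$ to obtain a map $A \to E(T)$, and then extend it along $A \subseteq E(S)$ to a homomorphism $E(S) \to E(T)$ using injectivity of $E(T)$. This extension is nonzero since its image contains the copy of $T$, so $\Hom_R(E(S),\,E(T)) \neq 0$; the right $H$-ring hypothesis then forces $S \cong T$, which is exactly $(2)$.

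For $(2) \Rightarrow (3)$, let $0 \neq N \subseteq E(S)$ and let $T$ be a simple subfactor of $N$. Then $T$ is a simple subfactor of $E(S)$ as well, so $T \cong S$ by $(2)$. Since $S$ is essential in $E(S)$, the nonzero submodule $N$ meets $S$, and simplicity of $S$ gives $S \subseteq N$; hence $T \cong S \hookrightarrow N$, so $N$ is a Kasch module. The implication $(3) \Rightarrow (4)$ is just the special case $N = E(S)$.

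For $(4) \Rightarrow (1)$, I would argue by contradiction. Suppose there are nonisomorphic simple right $R$-modules $S_1, S_2$ and a nonzero homomorphism $f : E(S_1) \to E(S_2)$. Since $S_2$ is essential in $E(S_2)$, the nonzero submodule $\Img(f)$ contains $S_2$, so $S_2$ is a simple subfactor of $E(S_1)$. By the Kasch hypothesis on $E(S_1)$, the module $S_2$ embeds into $E(S_1)$; but $S_1$ is essential in $E(S_1)$, so this copy of $S_2$ meets $S_1$, and both being simple forces $S_1 \cong S_2$ — a contradiction. Hence $\Hom_R(E(S_1),\,E(S_2)) = 0$ whenever $S_1 \ncong S_2$, i.e.\ $R$ is a right $H$-ring. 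I do not expect a serious obstacle; the only points needing care are the routine verification that the extension constructed in $(1) \Rightarrow (2)$ is genuinely nonzero, and the bookkeeping with essential submodules in the last two implications.
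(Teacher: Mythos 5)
Your proof is correct, and the individual arguments use the same tools as the paper's: for $(1)\Rightarrow(2)$ you extend a map out of a submodule $A\subseteq E(S)$ along the inclusion using injectivity of $E(T)$, where the paper instead extends the inclusion $X/Y\hookrightarrow E(X/Y)$ along $X/Y\subseteq E(S)/Y$ and composes with the quotient $E(S)\to E(S)/Y$ — the same idea with slightly different bookkeeping; and your $(2)\Rightarrow(3)$ is the paper's argument via essentiality of $S$ in $E(S)$. Where you genuinely differ is in the logical decomposition: the paper proves $(1)\Leftrightarrow(2)$, $(2)\Leftrightarrow(3)$ and $(3)\Rightarrow(4)$, but never proves an implication out of $(4)$, so as written its proof does not close the equivalence at $(4)$. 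Your cycle $(1)\Rightarrow(2)\Rightarrow(3)\Rightarrow(4)\Rightarrow(1)$ supplies exactly the missing step: from a nonzero $f:E(S_1)\to E(S_2)$ you use essentiality of $S_2$ to realize $S_2\subseteq\Img(f)\cong E(S_1)/\Ker(f)$ as a simple subfactor of $E(S_1)$, then the Kasch hypothesis embeds $S_2$ into $E(S_1)$, where essentiality of $S_1$ forces $S_1\cong S_2$. This is in spirit the paper's $(2)\Rightarrow(1)$ argument upgraded to use only the weaker hypothesis $(4)$, and it makes your version the more complete proof of the stated four-way equivalence.
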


\begin{proof}$(1) \Rightarrow (2)$ Suppose $A$ is a right $H$-ring. Let $S$ be a simple right $R$-module and $E(S)$ be its injective hull.  Let $U=\frac{X}{Y}$ be a simple subfactor of $E(S)$. We need to show that, $S\cong U.$ Assume the contrary that $U$ and $S$ are not isomorphic. Let $f: \frac{X}{Y} \to \frac{E(S)}{Y}$  and $g: \frac{X}{Y} \to E(\frac{X}{Y})$ be the inclusion homomorphisms. Then by injectivity of $E(\frac{X}{Y})$, there is a (nonzero) homomorphism $h: \frac{E(S)}{Y} \to E(\frac{X}{Y})$. Therefore for the natural epimorhism $\pi : E(S) \to  \frac{E(S)}{Y}$, we obtain $0 \neq h\pi \in \Hom(E(S),\, E(\frac{X}{Y})).$ This contradicts  the fact that, $R$ is right $H$-ring. 

$(2) \Rightarrow (1):$ Let $S_1$ and $S_2$ be nonisomorphic simple right $R$-modules. Suppose that there is a nonzero homomorphism $f: E(S_1) \to E(S_2)$. Then $\frac{E(S_1)}{\Ker (f)} \cong f(E(S_2))$. Thus, there is a simple factor $\frac{X}{\Ker (f)} \cong S_2$. This contradicts the assumption that simple subfactors of $E(S_1)$ are isomorphic to $S_1$. Thus, we must have $f=0,$ and so $R$ is a right $H$-ring.

$(2) \Rightarrow (3)$ Let $K$ be a nonzero submodule of $E(S)$ and $U$ be a simple subfactor of $K$. Then $U$ is also a subfactor of $E(S)$, and so $U$ is isomorphic to $S$ by $(2).$ Since $Soc(E(S))=S$ and $K$ is nonzero, $Soc(K)=S.$ Therefore $U$ embeds in $K$, and so $K$ is a Kasch module.

$(3) \Rightarrow (2)$ and $(3) \Rightarrow (4)$  are clear.
\end{proof}

The authors in \cite{kasch}, ask about the rings all whose right modules are Kasch. To the best of our knowledge, this question is not solved completely. In \cite{fullykasch}, the authors gives some sufficient conditions for the rings having this property. We observed the following necessary conditions for the rings whose right modules are Kasch.

\begin{proposition}\label{prop:allmodulesKasch} If every right $R$-module is a Kasch module, then $R$ is a right $H$-ring and right semiartinian.
\end{proposition}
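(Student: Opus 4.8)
The plan is to prove the two assertions separately: first that $R$ is right $H$, then that $R$ is right semiartinian. For the $H$-ring part, I would argue by contradiction using the characterization in Proposition \ref{prop:H-ringcharacterization}. If $R$ is not right $H$, then by the equivalence $(1) \Leftrightarrow (4)$ there is a simple right $R$-module $S$ such that $E(S)$ is not a Kasch module. But $E(S)$ is a right $R$-module, so by hypothesis it must be Kasch — an immediate contradiction. (One could equally well invoke $(1)\Leftrightarrow(2)$: if $R$ is not right $H$ then some $E(S_1)$ has a simple subfactor $U \not\cong S_1$, and since $\operatorname{Soc}(E(S_1)) = S_1$, the module $E(S_1)$ cannot embed $U$, contradicting that $E(S_1)$ is Kasch.) So the $H$-ring half is essentially free once we have Proposition \ref{prop:H-ringcharacterization} in hand.

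For right semiartinianity, the goal is to show $\operatorname{Soc}(M) \neq 0$ for every nonzero right $R$-module $M$, equivalently that every nonzero right $R$-module has a simple submodule. Suppose not: let $M$ be a nonzero module with $\operatorname{Soc}(M) = 0$. I would like to produce from $M$ a module that is not Kasch, contradicting the hypothesis. The natural candidate is to look at a cyclic submodule $xR \cong R/I$ of $M$ with $\operatorname{Soc}(xR) = 0$ (which still holds, as socle of a submodule sits inside the socle of the module), pick a maximal submodule $K$ of $xR$ (which exists, since by Theorem \ref{prop:allmodulesareHmodules} or Proposition \ref{prop:radM}(3) every module being co-Kasch would force max-ring — but wait, here the hypothesis is that modules are Kasch, not co-Kasch, so I cannot invoke that directly). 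Instead I would use that $xR$ is cyclic nonzero, hence has a maximal submodule $K$, giving a simple subfactor $S = xR/K$. For $xR$ to be Kasch, $S$ must embed in $xR$, forcing $\operatorname{Soc}(xR) \neq 0$, contradiction. Thus every nonzero cyclic module has nonzero socle, and hence every nonzero module $M$ has a nonzero cyclic submodule with nonzero socle, giving $\operatorname{Soc}(M) \neq 0$.

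The main obstacle I anticipate is this last step: deducing that \emph{every} nonzero module has nonzero socle from the fact that every nonzero \emph{cyclic} module has nonzero socle is precisely the statement that $R$ is right semiartinian, and one must be careful that the deduction is valid. In fact it is: if $xR$ is a nonzero cyclic submodule of $M$ and $\operatorname{Soc}(xR) \neq 0$, then a simple submodule of $xR$ is a simple submodule of $M$. So the real content is just that every nonzero cyclic module has a simple submodule, which follows as above — a cyclic module $xR \neq 0$ has a maximal submodule $K$, the simple module $xR/K$ is a subfactor of the Kasch module $xR$, hence embeds in $xR$. I should double-check the edge case where $K = 0$, i.e. $xR$ itself simple, but then trivially $\operatorname{Soc}(xR) = xR \neq 0$. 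So the argument is complete, and the semiartinian half, like the $H$-ring half, turns out to be short once the right subfactor is identified.
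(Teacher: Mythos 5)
Your proposal is correct and follows essentially the same route as the paper: the $H$-ring half is immediate from Proposition \ref{prop:H-ringcharacterization}, and the semiartinian half comes from taking a nonzero cyclic submodule $xR$, a maximal submodule $K$ of it, and embedding the simple subfactor $xR/K$ back via the Kasch hypothesis (the paper applies Kaschness of $M$ itself rather than of $xR$, but the argument is the same). No gaps.
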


\begin{proof} Suppose every right $R$-module is  Kasch. Then $R$ is a right $H$-ring by  Theorem \ref{prop:H-ringcharacterization}. Let $M$ be a nonzero right $R$-module and $0\neq m \in M.$ Then $mR$ has a maximal submodule $K$, and $\frac{mR}{K}$ is a simple subfactor of $M$. Therefore $\frac{mR}{K}$ embeds in $M$, because $M$ is a Kasch module and so  $\Soc(M) \neq 0.$ Thus the assumption implies that every nonzero right $R$-module contains a simple submodule. Hence $R$ is right semiartinian.

\end{proof}

The converse of Proposition \ref{prop:allmodulesKasch} is not true in general. That is, there are right semiartinian right $H$-rings that admit right modules  that are not right Kasch. For example the ring given in \cite[Remarks 3.14 (3)]{kasch}, is a semartinian  V-ring (hence an H-ring) which has a module that is not Kasch.

\section{co-Kasch modules over commutative rings}

In this section we study co-Kasch modules over commutative rings. We prove that, a Prüfer domain $R$ is Dedekind if and only if every ideal of  $R$ is a  co-Kasch module. We also characterize co-Kasch modules over the ring of integers.

\begin{proposition} Let $R$ be a domain and $M$ be an $R$-module. If $M \neq T(M)$, then $M$ is a co-Kasch module if and only if $\Hom_R(M, \frac{R}{P} )\neq 0$ for each $P \in \Omega .$
\end{proposition}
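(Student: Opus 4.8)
The statement asks to characterize co-Kasch modules $M$ over a domain $R$ in the case $M \neq T(M)$, claiming that $M$ is co-Kasch if and only if $\Hom_R(M, R/P) \neq 0$ for every maximal ideal $P \in \Omega$. One direction is essentially immediate: if $M$ is co-Kasch, then in particular every simple factor of $M$ is already a homomorphic image of $M$, but we need that \emph{every} simple module $R/P$ with $P \in \Omega$ occurs. So the real content of the forward direction is to exhibit, for each maximal ideal $P$, a subfactor of $M$ isomorphic to $R/P$; then the co-Kasch hypothesis forces $\Hom_R(M, R/P) \neq 0$.

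The plan is as follows. First I would use the hypothesis $M \neq T(M)$ to pick an element $x \in M$ that is not torsion, so that $xR \cong R$ as an $R$-module (since $R$ is a domain and $\mathrm{ann}_R(x) = 0$). Thus $M$ contains a copy of $R$ as a submodule, namely $xR$. Now for any maximal ideal $P \in \Omega$, the module $R/P \cong xR/xP$ is a simple subfactor of $M$. Since $M$ is co-Kasch, Proposition (the characterization via $\Hom$) gives $\Hom_R(M, R/P) \neq 0$. This handles $(1) \Rightarrow (2)$ in the proposition, and it works for \emph{all} $P \in \Omega$ precisely because $R \hookrightarrow M$.

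For the converse, suppose $\Hom_R(M, R/P) \neq 0$ for every $P \in \Omega$. Let $S$ be an arbitrary simple subfactor of $M$; write $S \cong R/Q$ for some maximal ideal $Q$ (every simple $R$-module over a commutative ring is of this form). By hypothesis $\Hom_R(M, R/Q) \neq 0$, i.e. $S$ is a homomorphic image of $M$. Hence every simple subfactor of $M$ is a homomorphic image of $M$, so $M$ is co-Kasch. This direction is purely formal — the key observation is that over a commutative ring the isomorphism type of a simple module is determined by its annihilator, a maximal ideal, so "being a homomorphic image for every maximal ideal" already covers every possible simple subfactor.

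The only genuinely delicate point — and the one I would state carefully — is the passage in the forward direction from "$M$ is not torsion" to "$M$ contains a free submodule of rank one, hence a copy of $R/P$ as a subfactor for every $P$." Once that embedding $R \hookrightarrow M$ is in hand, the rest is bookkeeping with the $\Hom$-characterization of co-Kasch modules already proved in Section 2. So I expect no serious obstacle; the proof is short, and its whole force lies in the elementary fact that a non-torsion element of a module over a domain generates a free submodule.
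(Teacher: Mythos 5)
Your proposal is correct and takes essentially the same route as the paper: a non-torsion element gives an embedding $R\cong xR\subseteq M$, so every simple module $R/P$ is a subfactor of $M$ and the co-Kasch hypothesis yields $\Hom_R(M,R/P)\neq 0$, while the converse is the formal observation (which the paper dismisses as clear) that every simple subfactor of $M$ over the commutative domain $R$ is isomorphic to some $R/P$ with $P\in\Omega$.
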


\begin{proof} Suppose that, $M$ is a co-Kasch module. Since $M \neq T(M),$ there is an $m \in M$ such that $mR \cong R$. Thus every simple module is a homomorphic image of $M$ by the co-Kasch module assumption. Then for each maximal ideal $P$ of $R$, there is an epimorphism $f_P: M\to \frac{R}{P}$. This follows the necessity.  Sufficiency is clear.
\end{proof}

\begin{lemma} \label{lem:ValuationDVR} Let $R$ be a valuation domain. Every ideal of $R$ is a co-Kasch module if and only if $R$ is a DVR.
\end{lemma}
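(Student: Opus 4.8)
The plan is to prove both directions using the structure theory of valuation domains, specifically the dichotomy between valuation domains whose maximal ideal is principal (so $R$ is a DVR) and those whose maximal ideal is not finitely generated. Let $P = J(R)$ be the maximal ideal of $R$, so $R/P$ is the unique simple $R$-module up to isomorphism. By Proposition \ref{prop:radM}(2) (since a valuation domain is local), an $R$-module $N$ is co-Kasch if and only if $\Rad(N) \neq N$, i.e., if and only if $N$ has a maximal submodule; and since $R/P$ is the unique simple module, this is equivalent to $PN \neq N$. So the statement to prove reduces to: $PI \neq I$ for every nonzero ideal $I$ of $R$ if and only if $R$ is a DVR.

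For the ``if'' direction, suppose $R$ is a DVR, so $P = \pi R$ for a uniformizer $\pi$, and every nonzero ideal has the form $\pi^n R$. Then $PI = \pi^{n+1} R \subsetneq \pi^n R = I$ by unique factorization (the valuation is discrete), so every nonzero ideal is co-Kasch; the zero ideal is trivially co-Kasch (or excluded). For the ``only if'' direction, I would argue the contrapositive: if $R$ is not a DVR, I must produce a nonzero ideal $I$ with $PI = I$. The natural candidate is $I = P$ itself when $P$ is idempotent, but more robustly one uses that in a valuation domain that is not a DVR the maximal ideal is not finitely generated, and then $P = P^2$ — indeed, for a valuation domain, $P$ is principal iff $P \supsetneq P^2$, and if $P$ is not finitely generated one shows $P^2 = P$ directly: given $x \in P$, pick $y \in P$ with $y \notin xR$ (possible since $P \neq xR$), and by the valuation (total order on ideals) $x \in yR$, say $x = ry$; then $r \in P$ (else $x$ and $y$ generate the same ideal, forcing $y \in xR$), so $x = ry \in P \cdot P = P^2$. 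Hence $I = P$ satisfies $PI = P^2 = P = I$, so $P$ is not co-Kasch.

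The main obstacle — really the only delicate point — is the claim that a valuation domain is a DVR precisely when its maximal ideal is principal (equivalently, finitely generated), together with the computation $P^2 = P$ in the non-finitely-generated case. This is standard valuation theory: a valuation domain is Noetherian iff it is a DVR iff its value group is $\Z$ iff $P$ is principal; I would cite \cite{FC} for this. One should also handle the trivial case where $R$ is a field (then $P = 0$, every ideal is $0$ or $R$, both co-Kasch, and $R$ is vacuously/conventionally treated — a field is not usually called a DVR, so either the statement implicitly assumes $R$ is not a field, or one notes the field case separately). Assuming $R$ is not a field, the argument above is complete once the principal-vs-idempotent dichotomy for $P$ is invoked.
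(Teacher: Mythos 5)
Your reduction via Proposition \ref{prop:radM}(2) (an ideal $I$ of the valuation domain $R$ with maximal ideal $P$ is co-Kasch if and only if $PI\neq I$) and the DVR direction are correct, but the contrapositive direction has a genuine gap: it rests on the claim that a valuation domain is a DVR if and only if its maximal ideal $P$ is principal (equivalently, finitely generated). That equivalence is false. Take a valuation domain whose value group is $\Z\times\Z$ ordered lexicographically (such domains exist, e.g.\ by composing two discrete valuations or via generalized power series). The value group has a least positive element, namely $(0,1)$, so $P$ is principal, generated by any element $p$ of value $(0,1)$; nevertheless $R$ has Krull dimension $2$, is not Noetherian, and is not a DVR. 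In this ring $P^2\neq P$, so your only proposed witness, $I=P$, is in fact co-Kasch, and your argument terminates without exhibiting any ideal that fails to be co-Kasch --- although one exists: $I=\bigcap_{n\geq 1}P^n$ is the nonzero height-one prime (the elements whose value has positive first coordinate), and each such element is $p$ times another such element, so $PI=I$ and $I$ has no maximal submodules. In short, the dichotomy you invoke, ``$P$ principal (hence DVR) versus $P$ not finitely generated (hence $P^2=P$)'', does not cover the higher-rank valuation domains with principal maximal ideal, and these are exactly the delicate cases.

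The paper's proof shows how to close the gap: from co-Kaschness of $P$ one deduces $P^2\neq P$, hence $P=pR$ for any $p\in P\setminus P^2$ (this step, using that the ideals of a valuation domain are totally ordered, matches what you do); but then, instead of stopping, one applies the co-Kasch hypothesis to the further ideal $I=\bigcap_{n\geq 1}p^nR$, which has no maximal submodules (see \cite{maxprojective}), so by Proposition \ref{prop:radM}(1) it must be zero; finally, a valuation domain with $P=pR$ and $\bigcap_{n}p^nR=0$ is a DVR \cite{Atiyah}. Your computation that $P^2=P$ when $P$ is not finitely generated is correct, but it only disposes of part of the non-DVR case; to repair your outline you must add the intersection-ideal step (or otherwise exclude valuation domains of rank at least $2$ whose maximal ideal is principal).
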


\begin{proof} Sufficiency is clear. To prove the necessity, let $\mathbf{p}$ be the unique maximal ideal of $R$. By the hypothesis $\mathbf{p}$ is a co-Kasch module, and so $\mathbf{p}^2 \neq \mathbf{p}$,  otherwise $\mathbf{p}$ would have no simple factors. Let $p \in \mathbf{p} - \mathbf{p}^2$. Then $\mathbf{p}=Rp$, because $R$ is a valuation domain. Now consider the ideal $I= \cap_{n \in \Z^+} Rp^n.$ Then by  \cite[Proposition 15]{maxprojective} $I$ has no maximal submodules i.e. $\Rad(I)=I$. Thus, since $I$ is co-Kasch by the assumption, $I=0$, by Proposition \ref{prop:radM}(1). Therefore $R$ is a DVR by \cite[page 99, Exercise 4]{Atiyah}.
\end{proof}

%\begin{proposition}Let $R$ be a commutative Noetherian ring and $M=\prod_{P \in \Omega} \frac{R}{P}$. Then every factor of $M$ is co-Kasch if and only if $R$ is semilocal.\end{proposition}

\begin{proposition}  Let $R$ be a Prüfer domain. The following statements are equivalent.
\begin{enumerate}
\item[(1)] Every ideal of $R$ is a co-Kasch module.
\item[(2)]  Every ideal of $R_{\mathbf{p}}$ is a co-Kasch module for every $\mathbf{p} \in \Omega.$
\item[(3)] $R_{\mathbf{p}}$ is a DVR for every $\mathbf{p} \in \Omega.$
\item[(4)] $R$ is Dedekind.
\end{enumerate}
\end{proposition}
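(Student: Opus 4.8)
The plan is to establish the cycle of equivalences $(4)\Rightarrow(1)\Rightarrow(2)\Rightarrow(3)\Rightarrow(4)$, using the valuation case (Lemma~\ref{lem:ValuationDVR}) as the local engine and the standard characterization of Dedekind domains as Pr\"ufer domains of Krull dimension one (equivalently, Pr\"ufer domains all of whose localizations at maximal ideals are discrete valuation rings).

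For $(4)\Rightarrow(1)$: a Dedekind domain is hereditary, so every ideal $I$ of $R$ is projective, and a projective module over a domain with $I\neq 0$ contains a copy of $R$ as a direct summand of a free module covering it; more directly, over a Dedekind domain every nonzero ideal is invertible, hence isomorphic (as an $R$-module) to a module with $R$ as a ``large'' quotient-contributing piece. The cleanest route is: a nonzero ideal $I$ is a finitely generated projective module, and for a Dedekind domain finitely generated projectives decompose as $R^{n-1}\oplus I'$; in any case $I$ surjects onto every simple module $R/P$ because $I \not\subseteq P$ for all but finitely many $P$, and for the finitely many $P\supseteq I$ one uses that $I_P$ is a nonzero ideal of the DVR $R_P$, which has a simple quotient, and this quotient is already an $R/P$-module and hence an $R$-homomorphic image of $I$. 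So every simple subfactor $R/P$ of $I$ is a homomorphic image of $I$, using that the only simple subfactors of a module over a domain are of the form $R/P$.

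For $(1)\Rightarrow(2)$: localization at $\mathbf p$ is exact, every ideal of $R_{\mathbf p}$ has the form $I_{\mathbf p}$ for an ideal $I$ of $R$, and an epimorphism $I \to R/P$ localizes to $I_{\mathbf p} \to (R/P)_{\mathbf p}$; one checks that simple $R_{\mathbf p}$-modules are precisely $R_{\mathbf p}/\mathbf p R_{\mathbf p} = (R/\mathbf p)_{\mathbf p}$ and that a simple subfactor of $I_{\mathbf p}$ comes by localization from a simple subfactor of $I$ supported at $\mathbf p$, so co-Kasch-ness descends to the localization. The implication $(2)\Rightarrow(3)$ is exactly Lemma~\ref{lem:ValuationDVR} applied to the valuation domain $R_{\mathbf p}$, since a localization of a Pr\"ufer domain at a maximal ideal is a valuation domain. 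Finally $(3)\Rightarrow(4)$ is the classical fact that a Pr\"ufer domain which is locally a DVR at every maximal ideal is Dedekind (it is Noetherian of dimension one and integrally closed).

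The main obstacle I anticipate is $(1)\Rightarrow(2)$, specifically verifying that co-Kasch-ness is inherited under localization: one must confirm that every simple subfactor $U$ of $I_{\mathbf p}$ is of the form $V_{\mathbf p}$ for a simple subfactor $V$ of $I$ with $V$ a $R/\mathbf p$-module, and that a surjection $I\twoheadrightarrow R/\mathbf p$ of $R$-modules genuinely induces a \emph{nonzero} $R_{\mathbf p}$-homomorphism $I_{\mathbf p}\to R_{\mathbf p}/\mathbf p R_{\mathbf p}$ --- the latter is clear since $(R/\mathbf p)_{\mathbf p}\neq 0$, but bookkeeping the subfactor correspondence requires care that no ``new'' simple subfactors appear after localization, which holds because over a domain a simple subfactor of $I$ is $R/Q$ for a maximal ideal $Q$, and $(R/Q)_{\mathbf p}\neq 0$ iff $Q\subseteq \mathbf p$ iff $Q=\mathbf p$. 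Everything else reduces to standard commutative algebra plus the already-proved Lemma~\ref{lem:ValuationDVR} and Proposition~\ref{prop:radM}.
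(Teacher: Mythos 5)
Apart from one step, your route coincides with the paper's. For $(1)\Rightarrow(2)$ the paper likewise lifts an ideal $B$ of $R_{\mathbf p}$ to an ideal $A$ of $R$, uses co-Kasch-ness of $A$ to produce a simple quotient $A/C\cong R/\mathbf p$, and localizes, concluding via Proposition \ref{prop:radM}(2) (over the local ring $R_{\mathbf p}$, co-Kasch is simply the existence of a maximal submodule); this is the same content as your localized surjection $I_{\mathbf p}\to R_{\mathbf p}/\mathbf p R_{\mathbf p}$, and your worry about ``new'' simple subfactors is settled just by locality of $R_{\mathbf p}$. $(2)\Rightarrow(3)$ is Lemma \ref{lem:ValuationDVR} in both write-ups, and your $(4)\Rightarrow(1)$ is the paper's argument in different dress: the paper shows $\Hom_R(I,R/J)=0$ forces $I=JI$, localizes at $J$ and applies Nakayama, while you construct the surjection $I\to R/P$ directly. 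One small omission on your side: in $(1)\Rightarrow(2)$, to extract a surjection $I\to R/\mathbf p$ from co-Kasch-ness you must note that $R/\mathbf p$ really is a simple subfactor of $I$, which holds because $xR\cong R$ embeds in $I$ for any $0\neq x\in I$ (the paper records this in its proof of $(4)\Rightarrow(1)$).

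The genuine gap is your $(3)\Rightarrow(4)$. The justification offered --- that such a ring ``is Noetherian of dimension one and integrally closed'' --- is not valid: being locally a DVR at every maximal ideal gives dimension one and integral closedness, but not Noetherianness. A Pr\"ufer domain with all localizations at maximal ideals DVRs is precisely an almost Dedekind domain, and non-Noetherian (hence non-Dedekind) almost Dedekind domains exist; the classical example is Nakano's ring of all algebraic integers in the field obtained from $\Q$ by adjoining a primitive $p$-th root of unity for every prime $p$. So the ``classical fact'' you invoke is false as stated and this implication, as written, fails. To be fair, the paper gives no argument here either (it declares $(3)\iff(4)$ clear), so you have not deviated from its proof; but note that in any almost Dedekind domain one has $I\neq \mathbf p I$ for every nonzero ideal $I$ and maximal ideal $\mathbf p$ (localize and apply Nakayama in the DVR $R_{\mathbf p}$), so $\Hom_R(I,R/\mathbf p)\neq 0$ for all $\mathbf p$ and conditions (1)--(3) hold in such a ring as well. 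Hence the equivalence with (4) genuinely needs an extra finiteness input (e.g. $R$ Noetherian, or finite character) or a different argument --- this is an issue to raise about the statement itself rather than something your sketch could have patched.
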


\begin{proof} $(1)  \Rightarrow (2)$  Let  $\mathbf{p}$ be a maximal ideal of $R$,  and $B$ be a nonzero proper ideal of $R_{\mathbf{p}}.$ Then there is a proper ideal $A$ of $R$ such that $A_{\mathbf{p}}=B.$ By the hypothesis, $A$ is a co-Kasch module. Thus there is a maximal submodule $C$ of $A$ such that $\frac{A}{C} \cong \frac{R}{\mathbf{p}}.$ Localizing at $\mathbf{p}$ gives: $(\frac{A}{C})_{\mathbf{p}} \cong (\frac{R}{\mathbf{p}})_{\mathbf{p}}$, because localization preserves isomorphisms (see, \cite[page 26: B)]{FC}) Then $\frac{A_{\mathbf{p}}}{C_{\mathbf{p}}} \cong \frac{R_{\mathbf{p}}}{P_{\mathbf{p}}}$ by \cite[page 26: C)]{FC}. Since $R_{\mathbf{p}}$ is a local ring with the unique maximal ideal $\mathbf{p}_{\mathbf{p}}$ and $A_{\mathbf{p}}=B$, we get $\frac{B}{C_{\mathbf{p}}} \cong \frac{R_{\mathbf{p}}}{\mathbf{p}_{\mathbf{p}}}$ is simple. Thus $C_{\mathbf{p}}$ is a maximal ideal of $B$, and so $B$ is a co-Kasch module by Proposition \ref{prop:radM}(b).

$(2)  \Rightarrow (3)$ Since $R$ is Prüfer,  $R_{\mathbf{p}}$ is a valuation domain for each  $\mathbf{p} \in \Omega.$  Thus $(3)$ follows  by Lemma \ref{lem:ValuationDVR}.

$(3)  \Rightarrow (2)$ Every nonzero ideal of a DVR is isomorphic to $R$. Hence $(2)$ follows.

 $(3) \iff (4)$ is clear.

$(4) \Rightarrow (1)$ Let $I$ be a nonzero ideal of $R.$ Note that $I$ is finitely generated. Since $R$ embeds in $I$, $I$ has a subfactor isomorpic to every simple $R$-module. For the proof, we need to show that $\Hom(I, \frac{R}{J}) \neq 0$ for each maximal ideal $J$ of $R$. Assume that $\Hom(I, \frac{R}{J}) = 0$ for some maximal ideal $J$ of $R$. Then $I= J\cdot I$. Localizing at the maximal ideal $J$, we obtain $I_J=J_J \cdot I_J.$ Since $R_J$ is a local ring with the unique maximal ideal $J_J$,the fact that $I_J$ is finitely generated and  $I_J=J_J \cdot I_J$ implies that $I_J=0$ by Nakayama's  Lemma. As $R$ is a domain $I_J=0$ gives $I=0.$ Contradiction. Therefore $I\neq J\cdot I$ for each maximal ideal $J$ of $R$, and so $\Hom(I, \frac{R}{J}) \neq 0$. Hence $I$ is a co-Kasch module, and this proves $(1).$
\end{proof}

Now, we shall give a characterization of co-Kasch modules over the ring of integers. For a $\Z$-module $M$, $T(M)$ is the torsion submodule of $M$. If $p$ is a prime integer the submodule $T_p(M)=\{m \in M \mid p^nm=0\,\, \text{for some positive integer} \, n \}$ is said to be the $p$-primary component of $M$. It is well known that $T(M)=\bigoplus _{p \in \mathcal{P}} T_p(M)$, where $\mathcal{P}$ is the set of prime integers.

\begin{proposition} The following are hold.
\begin{enumerate}
  \item[(a)]  A nonzero torsion $\Z$-module $M$ is a co-Kasch module if and only if  $pM \neq M$ for each prime $p$ with $T_p(M) \neq 0$.
  \item[(b)]  If $T(M) \neq M$, then  $M$ is a co-Kasch module if and only if $pM\neq M$ for each prime $p$.
\end{enumerate}
\end{proposition}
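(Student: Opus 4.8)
The plan is to prove both parts by relating the condition ``every simple subfactor of $M$ is a homomorphic image of $M$'' to the vanishing of $pM$, using the fact that over $\Z$ the simple modules are exactly the $\Z/p\Z$ for $p$ prime, and that $\Hom_\Z(M,\Z/p\Z)\neq 0$ if and only if $pM\neq M$ (since $\Hom_\Z(M,\Z/p\Z)\cong\Hom_\Z(M/pM,\Z/p\Z)$ and $M/pM$ is a nonzero $\Z/p\Z$-vector space precisely when $pM\neq M$). So in each part the work reduces to: (i) identify which primes $p$ give a simple subfactor $\Z/p\Z$ of $M$, and (ii) check that for exactly those primes one has $pM\neq M$.

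For part (a), suppose first $M$ is co-Kasch and $T_p(M)\neq 0$. Then $T_p(M)$ contains a copy of $\Z/p\Z$, so $\Z/p\Z$ is a (simple) subfactor of $M$; by hypothesis $\Hom_\Z(M,\Z/p\Z)\neq 0$, whence $pM\neq M$. Conversely, suppose $pM\neq M$ whenever $T_p(M)\neq 0$, and let $S$ be a simple subfactor of $M$; say $S\cong\Z/q\Z$ for a prime $q$, with $S\subseteq N/K$ for $K\subseteq N\subseteq M$. Because $M$ is torsion, $N/K$ is torsion, and the element of $N/K$ generating $S$ has order $q$, so it lies in $T_q(N/K)$; this forces $T_q(M)\neq 0$ (an element of $N$ mapping onto a generator of $S$ has order divisible by $q$, hence some $\Z$-multiple of it is a nonzero element of $T_q(M)$). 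By hypothesis $qM\neq M$, so $\Hom_\Z(M,\Z/q\Z)\neq 0$, i.e.\ $S$ is a homomorphic image of $M$. Hence $M$ is co-Kasch.

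For part (b), assume $T(M)\neq M$. Since $T(M)\subsetneq M$ there is $x\in M$ of infinite order, so $x\Z\cong\Z$, and therefore $\Z/p\Z$ is a subfactor of $M$ for \emph{every} prime $p$. If $M$ is co-Kasch, then $\Hom_\Z(M,\Z/p\Z)\neq 0$ for every $p$, i.e.\ $pM\neq M$ for all $p$. Conversely, if $pM\neq M$ for every prime $p$, then every simple $\Z$-module is a homomorphic image of $M$; since every simple subfactor of $M$ is isomorphic to some $\Z/p\Z$, every simple subfactor of $M$ is a homomorphic image of $M$, so $M$ is co-Kasch. (One may also phrase the converse direction as a direct appeal to the proposition relating co-Kasch modules over a domain to the conditions $\Hom_R(M,R/P)\neq 0$ proved earlier in this section.)

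The only genuinely delicate point is the implication, in part (a), that a simple subfactor $\Z/q\Z$ of a torsion module $M$ forces $T_q(M)\neq 0$; here one must chase an element $n\in N$ whose image in $N/K$ generates $S$, note that $n$ has finite order $m$ with $q\mid m$, and observe that $(m/q)n$ is a nonzero element of $M$ killed by $q$, so it lies in $T_q(M)$. Everything else is the standard identification of $\Hom_\Z(M,\Z/p\Z)$ with $\Hom_{\Z/p\Z}(M/pM,\Z/p\Z)$ and the classification of simple $\Z$-modules, so I expect the proof to be short once that element-chase is spelled out.
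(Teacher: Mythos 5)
Your proposal is correct and follows essentially the same route as the paper: identify simple $\Z$-modules with the $\Z/p\Z$, use that $\Hom_\Z(M,\Z/p\Z)\neq 0$ exactly when $pM\neq M$, and in the converse of (a) reduce to showing that a simple subfactor $\Z/q\Z$ of a torsion module forces $T_q(M)\neq 0$. The only (harmless) divergence is in that last step: the paper decomposes $\frac{M}{Y}\cong\bigoplus_p \frac{T_p(M)}{T_p(Y)}$ and uses $\Hom(\Z/q\Z,\,T_{p}(M)/T_{p}(Y))=0$ for $p\neq q$, whereas you chase an element $n$ mapping to a generator of the simple subfactor and note that $(m/q)n$ is a nonzero element of $T_q(M)$ — both verifications are valid.
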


\begin{proof}$(a)$ Since $M$ is torsion, $M= \oplus_{p \in A} T_p(M)$, where $T_p(M)$ is the (nonzero) $p$-primary component of $M$. Suppose $M$ is an $H$-module.  Let $p$ be a prime such that $T_{p}(M)\neq 0$.  Then $S=\Soc (T_p(M)) \cong \frac{\Z}{p\Z}$ is a subfactor of $M$. Since $M$ is a co-Kasch module, $\frac{M}{L} \cong \frac{\Z}{p\Z}$. This implies that $pM\neq M$. This show the necessity. Conversely, let $\frac{X}{Y} \cong \frac{\Z}{p\Z}$ be a simple subfactor of $M$.  We have $\frac{X}{Y} \subseteq  \frac{M}{Y}\cong \oplus \frac{T_p(M)}{T_p(Y)}$. For a prime $q\neq p$, $\Hom(\frac{\Z} {p\Z}, \frac{T_q(M)}{T_q(Y)})=0$. Therefore, $\Hom (\frac{\Z }{p\Z}, \frac{T_p(M)}{T_p(Y)}) \neq 0.$ So that $T_p(M)\neq 0$. Now by the hypothesis $pM \neq M$, and so $M$ has a simple factor  isomorphic to $\frac{\Z}{p\Z}.$

\vspace{0.3cm}
$(b)$ Sufficiency is clear. For necessity suppose $T(M)\neq M.$ Then $M$ has a submodule isomorphic to $\Z$. Thus, $M$ has a factor isomorphic to $\Z_p$ for each prime $p$, by the co-Kasch module assumption.  Therefore $pM \neq M$ for each prime $p$.
\end{proof}

%If $I^2=I$ for some ideal $I$ of a commutative ring $R$, then $PI=I$ for each maximal ideal $P$ containing $I$. In particular, $I$ has no simple factor isomorphic to $\frac{R}{P}.$ This leads to the following.

%\section{Trivial extensions of co-Kaschmodules}

%Motivation of this problem comes from the fact that, over max H-rings, injective modules have this property.

\section*{Acknowledgement}
This work was supported by the Scientific and Technological Research Council of T\"{u}rkiye (TÜBİTAK) (Project number: 122F130).

%Sharpe, D. W., Vamos, P. (1972). Injective Modules. New York, NY: Cambridge University Press.

\end{document}